\newtheorem{conv}[defi]{Convention}
\newcommand{\congpf}{\stackrel{\sim}{\to}}
\newcommand{\Diff}{{\rm Diff}}
\newcommand{\Teich}{{\rm Teich}}
\newcommand{\Def}{{\rm Def}}
\newcommand{\Pic}{{\rm Pic}}
\newcommand{\Hilb}{{\rm Hilb}}
\newcommand{\gm}{{\mathfrak M}}
\newcommand{\Hom}{{\rm Hom}}
\newcommand{\id}{{\rm id}}
\newcommand{\cal}{\mathcal}
\newcommand{\kc}{{\cal C}}
\newcommand{\kk}{{\cal K}}
\newcommand{\ko}{{\cal O}}
\newcommand{\kp}{{\cal P}}
\newcommand{\kt}{{\cal T}}
\newcommand{\kx}{{\cal X}}
\newcommand{\ky}{{\cal Y}}
\newcommand{\ZZ}{\mathbb{Z}}
\newcommand{\RR}{\mathbb{R}}
\newcommand{\CC}{\mathbb{C}}
\newcommand{\II}{\mathbb{I}}
\newcommand{\PP}{\mathbb{P}}
\newcommand{\qqed}{\hspace*{\fill}$\Box$}
\date{Juin 2011}
\title{A Global Torelli theorem\\ for hyperk\"ahler manifolds}
\author{Daniel HUYBRECHTS}
\address{Universit\"at Bonn\\
Mathematisches Institut\\
Endenicher Allee 60\\
D--53115 Bonn, Germany}
\email{huybrech@math.uni-bonn.de}
\begin{document}
\maketitle

Compact hyperk\"ahler manifolds are  higher-dimensional generalizations of K3 surfaces. The
classical Global Torelli theorem for K3 surfaces, however, does not hold in  higher dimensions.
More precisely, a compact hyperk\"ahler manifold  is in general
not determined by its natural weight-two Hodge structure.
The text gives an account of a recent theorem of M.\ Verbitsky, which can be regarded
as a weaker version of the Global Torelli theorem  phrased in terms of the injectivity of the period
map on the connected components of the moduli space of marked manifolds.

\section{Introduction}

The Global Torelli theorem is said to hold for a
particular class of  compact complex algebraic or K\"ahler  manifolds
if any two manifolds of the given type can be distinguished by
their integral Hodge structures.

The most prominent examples for which a Global Torelli theorem has been proved classically
include  complex tori and complex curves.
Two complex tori $T=\CC^n/\Gamma$ and $T'=\CC^n/\Gamma'$
are biholomorphic complex manifolds if and only if there exists an isomorphism of weight-one Hodge structures
$H^1(T,\ZZ)\cong H^1(T',\ZZ)$. Similarly, two smooth compact complex curves $C$ and $C'$ are isomorphic
if and only if there exists an isomorphism of weight-one Hodge structures $H^1(C, \ZZ)\cong H^1(C',\ZZ)$
that in addition respects the intersection \mbox{pairing.}

\smallskip

Here, we are interested in  higher-dimensional analogues of
the following Global Torelli theorem for K3 surfaces. 

\smallskip

$\bullet$ {\it Two complex K3 surfaces $S$ and $S'$ are isomorphic if and only if
there exists an isomorphism of Hodge structures $H^2(S,\ZZ)\cong H^2(S',\ZZ)$ respecting
the intersection \mbox{pairing.}}

\smallskip

The result is  originally due to  Pjatecki{\u\i}-{\v{S}}apiro and {\v{S}}afarevi{\v{c}} 
in the algebraic case and to Burns and Rapoport for K3 surfaces of K\"ahler type (but as Siu proved later,
every K3 surface is in fact K\"ahler), see \cite{BeauvAst,BeauvK3} for details and references.

\smallskip

Recall that for complex tori $T$ and $T'$, any Hodge isomorphism $H^1(T,\ZZ)\cong H^1(T',\ZZ)$ is induced by an isomorphism
$T\cong T'$.  Also, for complex curves any Hodge isometry can be  lifted up to sign.
A similar stronger form of the Global Torelli theorem holds  for generic K3 surfaces.

\smallskip

$\bullet$ {\it For any Hodge isometry $\varphi:H^2(S,\ZZ)\congpf H^2(S',\ZZ)$ between two generic(!)
K3 surfaces, there exists an isomorphism $g:S\congpf S'$ with $\varphi=\pm g_*$.}

\subsection{Is there a Global Torelli for hyperk\"ahler manifolds?}

Compact hyperk\"ahler manifolds  are the natural higher-dimensional generalizations of K3 surfaces
and it would be most interesting to establish some version  of the  Global Torelli theorem for this important class of
Ricci-flat manifolds. In this context, the second cohomology $H^2(X,\ZZ)$  is the most relevant part
of cohomology  and not the much larger middle cohomology which one would usually consider for arbitrary compact K\"ahler mani\-folds. 
As we will recall in Section \ref{sect:Defi}, 
the second cohomology  of a compact hyperk\"ahler manifold comes with a natural quadratic form, the
Beauville--Bogomolov form, and its canonical weight-two Hodge structure is of a particularly simple type.

So, is a compact hyperk\"ahler manifold $X$  determined up to isomorphism by  its weight-two Hodge structure
$H^2(X,\ZZ)$ endowed with the Beauville--Bogomolov form? 
More precisely, are two compact hyperk\"ahler manifolds $X$ and $X'$ isomorphic if
$H^2(X,\ZZ)$ and $H^2(X',\ZZ)$ are  Hodge isometric, i.e.\ if  there exists an isomorphism
of weight-two Hodge structures $H^2(X,\ZZ)\cong H^2(X',\ZZ)$ that is compatible with
the Beauville--Bogomolov forms on both sides?
Unfortunately, as was discovered very early
on, a Global Torelli theorem for compact hyperk\"ahler manifolds cannot  hold true literally. 

The first counterexample was produced by Debarre in \cite{Deb}: 

\smallskip

$\bullet$ {\it There exist non-isomorphic compact hyperk\"ahler manifolds $X$ and $X'$
with iso\-metric weight-two Hodge structures. }

\smallskip

 See also \cite[Ex.\ 7.2]{Yosh} for examples
with $X$ and $X'$ projective (and in fact isomorphic to certain Hilbert schemes of points on
projective K3 surfaces).

In Debarre's example $X$ and $X'$ are bimeromorphic 
and for quite some time it was hoped that  $H^2(X,\ZZ)$  would at least determine the bimeromorphic type of $X$.
As two bimeromorphic K3 surfaces are always isomorphic,  a result of this type would still qualify as a true 
 generalization of the Global Torelli theorem for K3 surfaces. This hope was shattered by Namikawa's
example in
 \cite{Nam}:
 
 \smallskip
 
 $\bullet$ {\it There exist compact hyperk\"ahler manifolds $X$ and $X'$
(projective and of dimension four) with isometric Hodge structures $H^2(X,\ZZ)\cong H^2(X',\ZZ)$ but
without $X$ and $X'$ being bimeromorphic (birational).}

\smallskip

Nevertheless, at least for the time being the second cohomology of a compact hyperk\"ahler manifold is still believed to encode
most of the geometric information on the manifold. Possibly other parts of the cohomology might have to be added,
but no convincing general version of a conjectural Global Torelli theorem using more than the second cohomology
has been put forward so far.  

At the moment it seems unclear what the existence of a Hodge isometry $H^2(X,\ZZ)\cong
H^2(X',\ZZ)$ between two compact hyperk\"ahler manifolds could mean concretely
for the relation between the geometry of $X$ and $X'$ (but  see Corollary \ref{cor:CorHilb} for special examples). 
However, rephrasing the classical Torelli theorem for K3 surfaces in terms of moduli spaces
 suggests a result that  was eventually proved by Verbitsky in \cite{Verb}. 
 
\subsection{Global Torelli via moduli spaces}\label{sect:GTmoduli}

The following rather vague discussion is meant to motivate the main result of \cite{Verb} to
be stated in the next section. The missing details and precise definitions will be given later.

\subsubsection{} We start by rephrasing the Global Torelli theorem  for K3 surfaces  using  the mo\-duli space
$\gm$  of marked K3 surfaces and  the period map $\kp:\gm\to \PP(\Lambda\otimes\CC)$.
A marked K3 surface $(S,\phi)$ consists
of a K3 surface $S$ and an isomorphism of lattices $\phi:H^2(S,\ZZ)\congpf \Lambda$,
where $\Lambda:=2(-E_8)+3U$ is the unique even unimodular  lattice of signature $(3,19)$.
Two marked K3 surfaces $(S,\phi)$ and $(S',\phi')$
are isomorphic if there exists an isomorphism (i.e.\ a biholomorphic map)
$g:S\congpf S'$ with $\phi\circ g^*=\phi'$. Then by definition $\gm=\{(S,\phi)\}/_{\cong}$.

\smallskip

The Global Torelli theorem for K3 surfaces is equivalent to the following
statement.

\smallskip

$\bullet$ {\it The moduli space $\gm$ has two connected components
interchanged by $(S,\phi)\mapsto (S,-\phi)$ and the period map 
$$\kp:\gm\to D_\Lambda:=\{x\in\PP(\Lambda\otimes\CC)~|~x^2=0,~(x.\bar x)>0\},~
(S,\phi)\mapsto [\phi(H^{2,0}(S))]$$ is generically injective on each of the two components.}

\smallskip

\begin{rema}\label{rem:gen}
Injectivity really only holds generically, i.e.\ for $(S,\phi)$ in the complement of a countable
union of hypersurfaces (cf.\ Remark \ref{rem:perioddense}). This is  related to the aforementioned stronger form of the Global Torelli theorem
being valid only for generic K3 surfaces.
\end{rema}

Let us now consider the natural action
$${\rm O}(\Lambda)\times\gm\to\gm,~(\varphi,(S,\phi))\mapsto (S,\varphi\circ\phi).$$
 For any $(S,\phi)\in \gm^o$ in a connected component $\gm^o$ of $\gm$ the
subgroup of ${\rm O}(\Lambda)$ that fixes $\gm^o$ is $\phi\circ{\rm Mon}(X)\circ\phi^{-1}$, where  the
monodromy group ${\rm Mon}(S)\subset{\rm O}(H^2(S,\ZZ))$ is by definition generated by all monodromies 
$\pi_1(B,t)\to {\rm O}(H^2(S,\ZZ))$  induced by arbitrary smooth proper families $\kx\to B$ with $\kx_t=S$.

The transformation $-\id\in{\rm O}(\Lambda)$ induces the involution $(S,\phi)\mapsto (S,-\phi)$  that interchanges
the two connected components and, as it turns out, there is essentially no other $\varphi\in{\rm O}(\Lambda)$ 
with this property. This becomes part of the following reformulation of the Global Torelli theorem for K3 surfaces:
\smallskip

$\bullet$ {\it Each connected component $\gm^o\subset\gm$ maps generically injectively into $D_\Lambda$ and
for any K3 surface $S$ one has ${\rm O}(H^2(S,\ZZ))/{\rm Mon}(S)=\{\pm 1\}$.}\label{pref:GTK3}

\medskip

In order to show that this version implies the one above, one also needs the rather easy fact that
 any two K3 surfaces $S$ and $S'$ are deformation equivalent, i.e.\ that there exist a smooth proper family
$\kx\to B$ over a connected base and points $t,t'\in B$ such that $S\cong\kx_t$ and $S'\cong \kx_{t'}$. In particular,
all K3 surfaces are realized by complex structures on the same differentiable manifold.

\subsubsection{} Let us try to generalize the above discussion  to higher dimensions.
Restricting to compact hyperk\"ahler manifolds $X$  of a fixed deformation class,  the isomorphism type, say
$\Lambda$, of the lattice 
realized by the Beauville--Bogomolov form on $H^2(X,\ZZ)$
is unique, cf.\  Section \ref{sect:Defi}. So the moduli space $\gm_\Lambda$ of $\Lambda$-marked compact
hyperk\"ahler manifolds of fixed deformation type (the latter condition is
not reflected by the notation) can be introduced, cf.\ Section \ref{sect:msmarked} for details.

\smallskip

For the purpose of  motivation let us consider the following two statements. Both are false (!) \!in general, but the
important point here is that they are equivalent and that the first half of the second one turns out to be true.

\smallskip

$\bullet$ (Global Torelli, standard form) {\it Any Hodge isometry $H^2(X,\ZZ)\cong H^2(X',\ZZ)$
between generic $X$ and $X'$ can be lifted up to sign to an isomorphism  $X\cong X'$.}

\smallskip

$\bullet$ (Global Torelli, moduli version)  i) {\it On each   connected component 
$\gm_\Lambda^o\subset\gm_\Lambda$ 
   the period map $\kp:\gm_\Lambda^o\to D_\Lambda$ is generically injective.}
 ii)  {\it For any hyperk\"ahler manifold $X$ parametrized by $\gm_\Lambda$ one has
${\rm O}(H^2(X,\ZZ))/{\rm Mon}(X)=\{\pm 1\}$.}

\smallskip

\begin{rema}\label{rem:passagebim} In both statements, generic is meant in the sense of Remark \ref{rem:gen}. 
The standard form would then indeed imply that arbitrary Hodge isometric
$X$ and $X'$ are bimeromorphic. For details on the passage from generic to arbitrary
hyperk\"ahler manifolds and thus to the bimeromorphic version of the Global Torelli theorem,
see Section \ref{sec:GT1}.
\end{rema}

Only rewriting the desired  Global Torelli theorem in its moduli version allows one to pin
down the reason for its failure in higher dimensions: As shall be explained below, condition {\bf i)} is always
fulfilled and this is the main result of \cite{Verb}. It is condition {\bf ii)} which need not hold. Indeed, 
a priori the image of the natural action $\Diff(X)\to{\rm O}(H^2(X,\ZZ))$  can have index larger than two and hence ${\rm Mon}(X)$ will.

\subsection{Main result}

The following is a weaker version of the main result of  \cite{Verb}.
Teichm\"uller spaces are here replaced by the more commonly
used moduli spaces of marked manifolds. Additional problems occur
in the Teichm\"uller setting which have been addressed in a more recent version of
\cite{Verb} (see Remark \ref{rema:Teichprob}).

\begin{theo}[Verbitsky]\label{theo:Verb}  
Let $\Lambda$ be a lattice of signature $(3,b-3)$ and let
$\gm_\Lambda^o$ be a connected component of the moduli space  $\gm_\Lambda$  of 
marked compact hyperk\"ahler manifolds $(X,\phi:H^2(X,\ZZ)\congpf \Lambda)$. 
Then the period map 
$$\kp:\gm_\Lambda^o\to D_\Lambda\subset \PP(\Lambda\otimes\CC),~(X,\phi)\mapsto[\phi(H^{2,0}(X))]$$
is generically injective.
\end{theo}

More precisely, all fibers over points in the complement of the countable union
of hyperplane sections $D_\Lambda\cap\bigcup_{0\ne\alpha\in\Lambda}\alpha^\perp$ 
consist of exactly one point. For the precise definition of the moduli space
of marked hyperk\"ahler manifolds $\gm_\Lambda$, the period map~$\kp$, and
 the period domain $D_\Lambda$ we refer to the text.

\smallskip

The starting point for the proof of the theorem are the following results:

\smallskip

--  {\it The period map $\kp$ is \'etale, i.e.\ locally an isomorphism of complex manifolds.
This is the Local Torelli theorem, see \cite{BeauvJDG} and Section \ref{sec:LT}.}

\smallskip

-- {\it The period domain $D_\Lambda$ is known to be simply connected. This is a standard fact, 
see Section \ref{subsec:perioddomain}.}

\smallskip

-- {\it The period map $\kp$ is surjective on each connected component. The surjectivity of the period map
has been proved in \cite{HuyInv}, see Section \ref{sec:Globsurj}.}

\smallskip

These three facts suggest that $\gm_\Lambda$ may be a covering space of the simply connected period domain
$D_\Lambda$, which would immediately show that each connected component maps isomorphically
onto $D_\Lambda$. There are however two issues that have to be addressed:

\smallskip

$\bullet$ {\it The moduli space $\gm_\Lambda$ is a complex manifold, but it is not Hausdorff.}

$\bullet$ {\it Is the period map $\kp:\gm_\Lambda\to D_\Lambda$ proper?}

\smallskip

Verbitsky deals with both questions.  First one passes from $\gm_\Lambda$ to a Hausdorff
space $\overline\gm_\Lambda$ by identifying all inseparable points in $\gm_\Lambda$.
The new space $\overline\gm_\Lambda$  still maps to $D_\Lambda$ via the period map.
This first step is technically involved, but it is the properness of  $\kp:\overline\gm_\Lambda\to D_\Lambda$
that is at the heart of  Theorem \ref{theo:Verb}. 

\medskip

In this text we give  a complete and rather detailed proof of Verbitsky's theorem, following the general approach
of \cite{Verb}. Some of the arguments have been simplified and sometimes (e.g.\ in Section \ref{sect:periodstwistor}) we
chose to apply more classical techniques. Our presentation of the material is very close to
Beauville's account  of the theory of K3 surfaces in \cite{BeauvAst}. Unfortunately, due to time and space restrictions we
will not be able to discuss the interesting consequences of Verbitsky's theorem in any detail. Some
of the beautiful applications will be touched upon in Section \ref{sec:GT}.

\medskip

 {\bf Acknowledgments:} 
 For comments and questions on an early version of the text I wish to thank
 A.\ Beauville, O.\ Debarre, P.\ Deligne, R.\ Friedman, K.\ Hulek, K.~O'Grady, and M.\ Rapoport.  
I am most grateful to E.\ Markman. The presentation of the material
 is inspired by our discussions on preliminary versions of \cite{Verb}. He made me aware
 of various technical subtleties in \cite{Verb} and  suggested  improvements to this
 note.\footnote{This text was prepared  while enjoying the  hospitality and financial support of the
Mathematical Institute Oxford. The author is a member of   the SFB/TR 45 `Periods,
Moduli Spaces and Arithmetic of Algebraic Varieties' of the DFG.}
\section{Recollections}\label{sect:Defi}

We briefly recall the main definitions and facts. For an introduction with more details and references, see
e.g.\  \cite{GHJ}.

\begin{defi}
A compact hyperk\"ahler (or irreducible holomorphic symplectic)
manifold is a simply connected compact  complex manifold  of K\"ahler type $X$
such that $H^0(X,\Omega_X^2)$ is spanned by an everywhere non-degenerate two-form $\sigma$. 
\end{defi}

As long as no hyperk\"ahler metric is fixed,  one should maybe, more accurately,  speak of compact
complex manifolds of hyperk\"ahler type but we will instead mention explicitly when a hyperk\"ahler metric
is chosen. Recall that by Yau's solution of the Calabi conjecture, any
K\"ahler class $\alpha\in H^{1,1}(X,\RR)$ can be uniquely represented by the K\"ahler form
of a Ricci-flat K\"ahler metric $g$. In fact, under the above conditions on $X$, the holonomy of such a metric
is ${\rm Sp}(n)$, where $2n=\dim_\CC(X)$. In particular, besides the complex structure $I$ defining
$X$, there exist complex structures $J$ and $K$ satisfying the usual relation $K=I\circ J=-J\circ I$ and such that
$g$ is  also K\"ahler with respect to them. As will be recalled in Section \ref{sect:twistorRicci},
there is in fact a whole sphere of complex structures compatible with $g$.

\begin{exem}
K3 surfaces are the two-dimensional hyperk\"ahler manifolds. Recall that a compact complex
surface $S$ is a K3 surface, if the canonical bundle $\Omega_S^2$ is trivial and $H^1(S,\ko_S)=0$. It is not difficult to prove that
K3 surfaces are in fact simply connected. That they are also of K\"ahler type, a result due to Siu, is much deeper,
see \cite{BeauvAst} for the proof and references.
\end{exem}
 
The second cohomology $H^2(X,\ZZ)$ of a generic compact K\"ahler manifold can be
endowed with a quadratic form, the Hodge--Riemann pairing, which  in dimension $>2$
depends on the choice of a K\"ahler class and, therefore, is usually not integral.
For a compact hyperk\"ahler manifold $X$ the situation is much better. There exists a primitive integral
quadratic form $q_X$ on $H^2(X,\ZZ)$, the \emph{Beauville--Bogomolov form}, the main properties of which can be
summarized as follows:

\smallskip

$\bullet$ {\it $q_X$ is non-degenerate of signature $(3,{b}_2(X)-3)$.}

\smallskip

$\bullet$ {\it There exists a positive constant $c$ such that  $q(\alpha)^n=c\int_X\alpha^{2n}$ 
for all classes \mbox{$\alpha\in H^2(X,\ZZ)$,} i.e.\ up to scaling $q_X$ is a root of the top intersection
product on $H^2(X,\ZZ)$.}

\smallskip

$\bullet$ {\it The  decomposition $H^2(X,\CC)=(H^{2,0}\oplus H^{0,2})(X)\oplus H^{1,1}(X)$ is orthogonal with respect to
(the $\CC$-linear extension of) $q_X$. Moreover, $q_X(\sigma)=0$ and $q_X(\sigma,\bar\sigma)>0$.}

\smallskip

The second property ensures that $q_X$  is invariant under deformations, i.e.\ if $\kx\to B$ is a smooth 
family of compact hyperk\"ahler manifolds over a connected base $B$, then $q_{\kx_t}=q_{\kx_s}$ for
all fibres $\kx_s,\kx_t\subset\kx$. Here, an isomorphism $H^2(\kx_s,\ZZ)\cong H^2(\kx_t,\ZZ)$ 
is obtained by parallel transport along a path connecting $s$ and $t$.
 In fact, at least for $b_2(X)\ne 6$ the primitive quadratic form $q_X$ 
only depends on the underlying differentiable manifold. 

The last property shows that the weight-two Hodge structure on $H^2(X,\ZZ)$ endowed with $q_X$ is uniquely determined by the line $\sigma\in H^{2,0}(X)\subset
H^2(X,\CC)$.

Note that the lattice $\Lambda$ defined by $(H^2(X,\ZZ),q_X)$ is in general not unimodular and there is no reason
why it should always be even (although in all known examples it is).
No classification of lattices that can be realized by the Beauville--Bogomolov form on some
compact hyperk\"ahler manifold is known. Also note that
no examples of compact hyperk\"ahler manifolds are known that would realize the same lattice
without being deformation equivalent and hence diffeomorphic.
For a K3 surface $S$ the Beauville--Bogomolov form coincides with the intersection form on $H^2(S,\ZZ)$
which is isomorphic to the unique even unimodular lattice of signature $(3,19)$.

\begin{exem}
i) The Hilbert scheme (or Douady space) $\Hilb^n(S)$ parametrizing subschemes $Z\subset S$ of length $n$ 
in a K3 surface $S$ is a compact hyperk\"ahler manifold of dimension $2n$ (see \cite{BeauvJDG}).  Moreover,
$(H^2(\Hilb^n(S),\ZZ),q)\cong H^2(S,\ZZ)\oplus \ZZ(-2(n-1))$ 
for $n>1$ and in particular $b_2(\Hilb^n(S))=23$. Note that $\Hilb^n(S)$ can be deformed to
compact hyperk\"ahler manifolds which are not isomorphic to the Hilbert scheme of any K3 surface.

ii) If $T$ is a two-dimensional complex torus $\CC^2/\Gamma$ and $\Sigma:\Hilb^n(T)\to T$  is the morphism
induced by the additive structure of $T$, then the \emph{generalized Kummer variety}
$K^{n-1}(T):=\Sigma^{-1}(0)$ is a compact hyperk\"ahler manifold of dimension $2n-2$.
In this case, $(H^2(K^{n-1}(T),\ZZ),q)\cong H^2(T,\ZZ)\oplus \ZZ(-2n)$  for $n>2$ and thus $b_2(K^{n-1}(T))=7$.
Again, the generic deformation
of $K^{n-1}(T)$ is a compact hyperk\"ahler manifold not isomorphic to any generalized Kummer variety itself.

iii) The only other known examples were constructed by O'Grady, see \cite{OG1,OG2}. They are of dimension six, resp.\ ten.
\end{exem}

\section{Period domain and twistor lines}\label{sect:periodstwistor}

\subsection{Period domain}\label{subsec:perioddomain}
Consider a non-degenerate  lattice  $\Lambda$ with a quadratic form $q$ (not necessarily unimodular or even) 
of signature $(3,b-3)$. Later $\Lambda$ will be $H^2(M,\ZZ)$ of a hyperk\"ahler manifold
$M$ endowed with the Beauville--Bogomolov form.

The  \emph{period domain} associated to $\Lambda$ is the set
$$D:=D_\Lambda:=\{x\in\PP(\Lambda\otimes\CC)~|~q(x)=0\text{~~and~~} q(x,\bar x)>0\}.$$ 
The quadratic form $q$ is extended $\CC$-linearly.
Thus, $D$  is an open subset of the smooth quadric hypersurface defined by $q(x)=0$ and,
in particular, $D$ has the structure of a complex manifold of dimension $b-2$ which is obviously Hausdorff.

The global structure of the period domain $D$ is also well-known.
Let ${\rm Gr}^{\rm po}(2,V)$ be the Grassmannian of oriented positive planes in 
a real vector space $V$ endowed with a quadratic form and consider $\RR^b$
with the diagonal quadratic form $(+1,+1,+1,-1,\ldots,-1)$. 
See e.g.\  \cite[Exp.\ VIII]{BeauvAst} for the proof of the following.

\begin{prop}\label{prop:topD}
There exist diffeomorphisms
$$D\cong {\rm Gr}^{\rm po}(2,\Lambda\otimes\RR)\cong {\rm Gr}^{\rm po}(2,\RR^b)\cong{\rm O}(3,b-3)/\left({\rm SO}(2)\times {\rm O}(1,b-3)\right).$$
In particular, $D$ is connected with $\pi_1(D)=\{1\}$.\qqed
\end{prop}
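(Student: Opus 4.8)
The plan is to prove Proposition~\ref{prop:topD} by constructing the chain of diffeomorphisms from left to right. First I would fix the identification $\Lambda\otimes\RR \cong \RR^b$ with the diagonal form $(+1,+1,+1,-1,\dots,-1)$; this exists because a non-degenerate real quadratic form of signature $(3,b-3)$ is determined up to isometry by its signature (Sylvester's law of inertia), giving the middle isomorphism ${\rm Gr}^{\rm po}(2,\Lambda\otimes\RR)\cong{\rm Gr}^{\rm po}(2,\RR^b)$ for free.

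The key step is the diffeomorphism $D\cong{\rm Gr}^{\rm po}(2,\Lambda\otimes\RR)$. Given a point $x\in D$, write $x=u+iv$ with $u,v\in\Lambda\otimes\RR$. The condition $q(x)=0$ unwinds to $q(u)=q(v)$ and $q(u,v)=0$, and $q(x,\bar x)>0$ gives $q(u)+q(v)>0$; hence $P_x:=\langle u,v\rangle_\RR$ is a two-dimensional subspace on which $q$ is positive definite, and the ordered basis $(u,v)$ equips it with an orientation. One checks this is independent of the representative $u+iv$ of the line $x$: rescaling by $\lambda=a+ib$ replaces $(u,v)$ by another positively oriented orthogonal basis of the same plane. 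Conversely, an oriented positive plane $P$ with a positively oriented orthonormal basis $(e_1,e_2)$ yields the point $[e_1+ie_2]\in D$, well-defined since a change of oriented orthonormal basis is a rotation, which amounts to multiplying $e_1+ie_2$ by a unit complex number. These two maps are mutually inverse and manifestly smooth in local coordinates, so they are diffeomorphisms.

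For the last identification, I would let ${\rm O}(3,b-3)={\rm O}(\Lambda\otimes\RR)$ act on ${\rm Gr}^{\rm po}(2,\RR^b)$. This action is transitive: any positive plane can be moved to the standard one spanned by the first two coordinate vectors, and the orientation can then be adjusted. The stabilizer of the standard oriented positive plane consists of isometries preserving it and its orientation (hence acting as ${\rm SO}(2)$ there) and preserving its orthogonal complement, a form of signature $(1,b-3)$ (hence acting as ${\rm O}(1,b-3)$ there); this gives the homogeneous-space description ${\rm O}(3,b-3)/({\rm SO}(2)\times{\rm O}(1,b-3))$.

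Finally, connectedness and simple connectedness follow from the fibration ${\rm SO}(2)\times{\rm O}(1,b-3)\to{\rm O}(3,b-3)\to D$ together with the long exact homotopy sequence: using $\pi_0({\rm O}(3,b-3))$ has four components distinguished by the orientation behaviour on the positive and negative parts, while $\pi_0({\rm SO}(2)\times{\rm O}(1,b-3))$ surjects onto it, one gets $\pi_0(D)=1$; and $\pi_1$ of the total space surjects onto $\pi_1(D)$ while the connecting map from $\pi_1(D)$ lands in $\pi_0$ of the fiber, which forces $\pi_1(D)=1$ once one notes the total-space $\pi_1$ maps onto the fiber's $\pi_0$. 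The main obstacle is the bookkeeping with orientations and components in this last homotopy argument; everything earlier is essentially linear algebra, and in any case the reference \cite[Exp.~VIII]{BeauvAst} supplies the details.
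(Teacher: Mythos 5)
Your construction of the three diffeomorphisms is exactly the one the paper indicates (the paper itself only describes the maps $x\mapsto P(x)$ and $P\mapsto[a+ib]$ and refers to \cite[Exp.\ VIII]{BeauvAst} for the details), and that part of your argument is correct: the computation that $q(x)=0$ and $q(x,\bar x)>0$ force the real and imaginary parts $(u,v)$ to be an orthogonal pair of equal positive square spanning a plane, the check that rescaling $x$ by $a+ib$ changes $(u,v)$ by a matrix of determinant $a^2+b^2>0$, and the homogeneous-space description via transitivity and the stabilizer computation are all fine.

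The gap is in the last step. From the exact sequence $\pi_1(F)\to\pi_1(E)\to\pi_1(D)\to\pi_0(F)\to\pi_0(E)$ with $E={\rm O}(3,b-3)$ and $F={\rm SO}(2)\times{\rm O}(1,b-3)$, injectivity of $\pi_0(F)\to\pi_0(E)$ gives that $\pi_1(E)\to\pi_1(D)$ is surjective, i.e.\ $\pi_1(D)\cong\pi_1(E)/\mathrm{im}\,\pi_1(F)$. But $\pi_1(E)$ is \emph{not} trivial (the identity component retracts onto ${\rm SO}(3)\times{\rm SO}(b-3)$, so $\pi_1(E)\cong\ZZ/2\times\pi_1({\rm SO}(b-3))$), so surjectivity of $\pi_1(E)\to\pi_1(D)$ alone forces nothing; and your clinching clause --- ``the total-space $\pi_1$ maps onto the fiber's $\pi_0$'' --- refers to a map that does not occur in the sequence. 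What you actually need is that $\pi_1(F)\to\pi_1(E)$ is surjective; this is true because, after retracting onto maximal compacts, the ${\rm SO}(2)$ factor includes into ${\rm SO}(3)$ hitting the generator of $\pi_1({\rm SO}(3))=\ZZ/2$, while the ${\rm SO}(b-3)$ inside ${\rm O}(1,b-3)$ maps identically onto the ${\rm SO}(b-3)$ factor of $E$. Alternatively, you can avoid the fibration bookkeeping altogether: orthogonal projection of a positive plane $P$ onto the standard positive $3$-space is injective on $P$ (since $P$ meets the negative-definite complement trivially), which exhibits $D\cong{\rm Gr}^{\rm po}(2,\RR^b)$ as a bundle with contractible fibres over the Grassmannian of oriented $2$-planes in $\RR^3$, i.e.\ over $S^2$; connectedness and simple connectedness follow at once.
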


The first isomorphism is given by mapping $x\in D$ to the plane $P(x)\subset \Lambda\otimes\RR$ spanned
by the real and imaginary part of $x$. For the converse, choose an orthonormal positive oriented
basis $a,b\in P$ of  a given plane $P\in {\rm Gr}^{\rm po}(\Lambda\otimes\RR)$  and map $P$ to $a+ib\in D$.
For the last two diffeomorphisms choose an isometry $\Lambda\otimes\RR\cong \RR^b$.
In the following, we shall often tacitly pass from one description to the other and will not
distinguish between the point $x\in D$ and its associated plane
$P(x)\in{\rm Gr}^{\rm po}(2,\Lambda\otimes\RR)$.

\begin{rema}\label{rem:perioddense}
For any $0\ne\alpha\in \Lambda$ one can consider the intersection $D\cap \alpha^\perp$, where
$\alpha^\perp$ is the hyperplane $\{x\in\PP(\Lambda\otimes\CC)~|~q(x,\alpha)=0\}$. Thus $D\cap\alpha^\perp\subset D$ is
non-empty of complex codimension one.

Moreover,  for any open subset $U\subset D$ the set
$U\setminus \bigcup_{0\ne\alpha\in\Lambda}\alpha^\perp$ is dense in $U$. Indeed, pick a generic one-dimensional
disk $\Delta$ through a given point $x\in U$. Then each $\alpha^\perp$  intersects $\Delta$ in finitely many points
and since $\Lambda$ is countable, the intersection $\Delta\cap\bigcup\alpha^\perp$ is at most countable.
But the complement of any countable set inside $\Delta$ is dense in $\Delta$.
\end{rema}

\subsection{Twistor lines}

A subspace $W\subset \Lambda\otimes\RR$ of dimension three such that $q|_W$ is positive definite
is called  a \emph{positive three-space}.

\begin{defi} For any positive three-space $W$ one defines
the associated twistor line $T_W$ as
the intersection $$T_W:=D\cap \PP(W\otimes\CC).$$
\end{defi}

For $W$  a positive three-space,  $\PP(W\otimes\CC)$ is a plane
in $\PP(\Lambda\otimes\CC)$ and $T_W$ is a smooth quadric in $\PP(W\otimes\CC)\cong\PP^2$.
Thus, as a complex manifold $T_W$ is simply $\PP^1$. 

Two distinct points $x,y\in D$ are contained in one
twistor line if and only if their associated positive planes $P(x)$ and $P(y)$ span a positive
three-space $\langle P(x),P(y)\rangle\subset\Lambda\otimes\RR$.

\begin{defi}
A twistor line $T_W$ is called generic if $W^\perp\cap\Lambda=0$. 
\end{defi}

\begin{rema}\label{rem:generic}
One easily checks that $T_W$ is generic if and only if there exists a vector $w\in W$ with $w^\perp\cap \Lambda=0$,
which is also  equivalent to the existence of a point $x\in T_W$ such that $x^\perp\cap\Lambda=0$.
In fact, if $W$ is generic, then for all except a countable number of points $x\in T_W$ one has
$x^\perp\cap \Lambda=0$ (cf.\ Remark \ref{rem:perioddense}).
\end{rema}

\begin{defi}
Two points $x,y\in D$ are called equivalent (resp.\ strongly equivalent)  if there exists a
chain of  twistor lines (resp.\ generic twistor line) $T_{W_1},\ldots, T_{W_k}$ and points $x=x_1,\ldots, x_{k+1}=y$ 
with $x_i,x_{i+1}\in T_{W_i}$.
\end{defi}

The following is well-known, see  \cite{BeauvAst}.

\begin{prop}\label{prop:allequiv}
Any two points $x,y\in D$ are (strongly) equivalent.
\end{prop}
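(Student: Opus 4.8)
The plan is to prove the stronger statement, that any two points $x,y\in D$ are strongly equivalent, since this obviously implies ordinary equivalence. The strategy is purely linear-algebraic: using the identification $D\cong{\rm Gr}^{\rm po}(2,\Lambda\otimes\RR)$ from Proposition \ref{prop:topD}, a point $x$ corresponds to an oriented positive plane $P(x)$, and $x,y$ lie on a common twistor line $T_W$ precisely when $\langle P(x),P(y)\rangle$ is a positive three-space. So the task is to connect any two oriented positive $2$-planes by a chain of positive $3$-spaces, each step being generic in the sense that the $3$-space meets $\Lambda$ trivially in its orthogonal complement.

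First I would reduce to connecting two positive $2$-planes $P_0=P(x)$ and $P_1=P(y)$ that already span a common positive three-space, in at most a bounded number of steps. The key geometric observation is that the space of positive three-spaces is itself connected (it is an open subset of an appropriate real Grassmannian, or one can see it via ${\rm O}(3,b-3)$ acting transitively), and more usefully: given any positive $2$-plane $P$, the set of positive $3$-spaces $W\supset P$ is nonempty and in fact connected, because it is parametrized by the choice of a unit vector in $P^\perp$ with positive square, i.e. an open subset of a sphere bundle, which is connected once $\dim\Lambda\ge 5$ (here $b\ge 5$ since the signature is $(3,b-3)$ with $b-3\ge 2$ for the theory to be nonvacuous — and one should note the degenerate small-$b$ cases separately or observe they don't arise for actual hyperk\"ahler manifolds). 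Then a standard connectedness/covering argument along a path in ${\rm Gr}^{\rm po}(2,\Lambda\otimes\RR)$ from $P_0$ to $P_1$, subdividing the path finely enough that consecutive planes $P_{t_i},P_{t_{i+1}}$ both lie in a common positive $3$-space $W_i$, produces the desired chain: consecutive planes are connected by the twistor line $T_{W_i}$.

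The genericity refinement is where one has to be slightly careful. Having produced a chain $T_{W_1},\dots,T_{W_k}$ of possibly non-generic twistor lines, I would perturb each $W_i$ slightly to a nearby positive three-space $W_i'$ with $W_i'^\perp\cap\Lambda=0$; this is possible because the non-generic $W$'s, i.e. those with $W^\perp$ meeting some $\alpha^\perp$-type condition, form a countable union of proper real-analytic subsets of the Grassmannian of positive three-spaces (for each $0\ne\alpha\in\Lambda$ the condition $\alpha\in W^\perp$ is $\alpha\perp W$, a closed nowhere-dense condition), so the generic $3$-spaces are dense, by the Baire-category / countable-union argument already used in Remark \ref{rem:perioddense}. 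One then has to also move the connecting points $x_i$ so that $x_i$ lies on both $T_{W_{i-1}'}$ and $T_{W_i'}$; this is arranged by choosing the original subdivision so that each $P_{t_i}$ lies in the \emph{interior} of the family of positive $3$-spaces through it relative to both neighbours, giving room to perturb, and by Remark \ref{rem:generic} a generic twistor line contains the point in question automatically after a further small move along it.

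The main obstacle I expect is the bookkeeping in the genericity step: one must simultaneously perturb all the $W_i$ to be generic \emph{and} keep the chain connected, i.e. preserve the incidences $x_i\in T_{W_{i-1}'}\cap T_{W_i'}$. The clean way to handle this is to do the whole construction "in one shot": fix a smooth path $W_t$, $t\in[0,1]$, of positive three-spaces with $P(x)\subset W_0$, $P(y)\subset W_1$ (which exists by connectedness of the relevant Grassmannian together with the connectedness of $\{W\supset P\}$ noted above, concatenating three sub-paths), perturb this path to make $W_t$ generic for all but finitely many $t$ — or even, by transversality, for a dense set including a suitable finite subdivision $0=t_0<\dots<t_m=1$ — then pick $x_i\in T_{W_{t_i}}\cap T_{W_{t_{i+1}}}$, which is nonempty and generic by Remark \ref{rem:generic} since for adjacent parameters $W_{t_i}$ and $W_{t_{i+1}}$ share a positive $2$-plane and hence their twistor lines meet. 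Everything else is routine, and the statement for ordinary (not strong) equivalence is then immediate.
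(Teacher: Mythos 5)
There is a genuine gap, and it sits at the heart of your construction. You assert that after ``subdividing the path finely enough'' consecutive planes $P_{t_i},P_{t_{i+1}}$ lie in a common positive three-space, and, in the one-shot variant, that adjacent three-spaces $W_{t_i},W_{t_{i+1}}$ share a positive $2$-plane so that $T_{W_{t_i}}\cap T_{W_{t_{i+1}}}\ne\emptyset$. Neither claim follows from proximity. Two $2$-planes lie on a common twistor line only if they meet in a line (so that they span a $3$-space), and two $3$-spaces have intersecting twistor lines only if they meet in a positive $2$-plane; for $b\geq 5$ these are proper closed (positive-codimension) incidence conditions in the relevant product of Grassmannians. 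Two \emph{nearby} $2$-planes generically span a $4$-space (e.g.\ $\langle e_1,e_2\rangle$ and $\langle e_1+\varepsilon e_3, e_2+\varepsilon e_4\rangle$), and two nearby $3$-spaces generically meet in $0$. So no subdivision of a generic path, however fine, produces consecutive terms on a common twistor line, and the subsequent genericity perturbation of the $W_i$ would in any case destroy whatever special incidences you had arranged. (A smaller inaccuracy: the positive vectors in $P^\perp$, which has signature $(1,b-3)$, form a two-sheeted cone, not an open piece of a sphere bundle; the set of positive $3$-spaces through $P$ is still connected, but for a different reason.)

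The paper's proof circumvents exactly this obstruction by never moving both basis vectors of a plane at once. It shows each equivalence class is open: for $x$ with $P(x)=\langle a,b\rangle$ and a nearby plane $\langle a',b'\rangle$, it passes through the intermediate planes $\langle a,c\rangle$ and $\langle b',c\rangle$ for a fixed auxiliary positive vector $c$, so that consecutive planes in the chain always share a vector and hence span a $3$-space, which is positive for $(a',b')$ close to $(a,b)$; connectedness of $D$ then finishes the argument. Genericity is obtained for free by choosing $c$ once and for all with $c^\perp\cap\Lambda=0$ (Remark \ref{rem:generic}), rather than by an a posteriori perturbation of the whole chain. If you want to keep a path-based formulation, you must build the path out of such one-vector-at-a-time moves; as written, the argument does not produce a chain of twistor lines at all.
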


\begin{proof}
Since $D$ is connected, it suffices to show that
each equivalence class is open. Let us deal with the weak version first.

\vskip-0.3cm

$$\hskip-2cm
\begin{picture}(-100,100)
\put(-150,80){\line(6,-1){200}}
\put(-40,90){\line(1,-2){80}}
\put(-150,60){\line(2,-1){220}}

\put(-128,80){\makebox{$x=\langle a,b\rangle$}}
\put(-168,40){\makebox{$\langle a',b'\rangle$}}
\put(-20,65){\makebox{$x_2=\langle a,c\rangle$}}
\put(20,-20){\makebox{$x_3=\langle b',c\rangle$}}

\put(60,40){\makebox{$T_1$}}
\put(20,-70){\makebox{$T_2$}}
\put(72,-60){\makebox{$T_3$}}

\put(-116,74.5){\circle*{3}}
\put(-24.5,59){\circle*{3}}
\put(16.7,-23.5){\circle*{3}}
\put(-136,53){\circle*{3}}
\end{picture}
$$

\vskip3cm

Consider $x\in D$ and choose a basis for the corresponding
positive plane   $P(x)=\langle a,b\rangle$. Here and in the sequel, the order of the basis vector
is meant to fix the orientation of the plane.
Pick $c$ such that $\langle a,b,c\rangle$ is a positive three-space.
Then for $(a',b')$ in an open neighbourhood of $(a,b)$ the spaces $\langle a,b',c\rangle$ and $\langle a',b',c\rangle$
are still positive three-spaces. 
Let $T_1,T_2$, and $T_3$ be the  twistor lines associated to
$\langle a,b,c\rangle$, $\langle a,b',c\rangle$, resp.\ $\langle a',b',c\rangle$. Then $P(x)=\langle a,b\rangle,
\langle a,c\rangle\in T_1$, $\langle a,c\rangle,\langle b',c\rangle\in T_2$, and $\langle b',c\rangle, \langle a',b'\rangle\in T_3$. Thus, $x$ and $\langle a',b'\rangle$ are connected via the chain
of the three  twistor lines $T_1,T_2$, and $T_3$.

For the strong equivalence, choose in the above argument $c$ such that $c^\perp\cap\Lambda=0$. Then the 
 twistor lines associated to the positive three-spaces $\langle a,b,c\rangle$, $\langle a,b',c\rangle$,
and $\langle a',b',c\rangle$
are all generic (see Remark \ref{rem:generic}).
\end{proof}

This easy observation is crucial for the global surjectivity of the period map (see Section \ref{sec:Globsurj}).
In order to prove
that the period map is a covering map, one also needs a local version of the surjectivity
(cf.\ Section \ref{sec:Locsurj}) which in turn relies
on a local version of Proposition \ref{prop:allequiv}. This shall be explained next.

\begin{conv}\label{conv}
In the following, we consider balls in $D$ and write $B\subset \bar B\subset D$ 
when $\bar B$ is a closed ball in a differentiable chart in $D$. In particular, $B$ will be the
open set of interior points in $\bar B$.
\end{conv}
 
\begin{defi}
Two points $x,y\in B\subset\bar B\subset D$ are called  equivalent (resp.\ strongly equivalent) as points in $B$
if there exist a chain of (generic) twistor lines $T_{W_1},\ldots,T_{W_k}$ and
points $x=x_1,\ldots,x_{k+1}=y\in B$  
such that $x_i,x_{i+1}$ are contained in the same connected component of $T_{W_i}\cap B$.
\end{defi}

Note that a priori two points $x,y\in B$ could be (strongly) equivalent as points in $D$ without being (strongly)
equivalent as points in $B$. In the local case, only strong equivalence will be used.

The local version of Proposition \ref{prop:allequiv} is the following.

\begin{prop}\label{prop:localstrong}
For a given ball $B\subset \bar B\subset D$ any two points $x,y\in B$ are strongly equivalent as points in $B$.
\end{prop}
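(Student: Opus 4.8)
The plan is to mimic the global argument of Proposition~\ref{prop:allequiv}, but to run it carefully inside the ball~$B$ so that at every step the relevant pieces of the twistor lines stay inside $B$ and remain connected there. Since $B$ is connected, it again suffices to prove that strong equivalence as points in $B$ is an open equivalence relation on $B$: each strong equivalence class is then open, the classes partition $B$, and connectedness of $B$ forces there to be only one class. So fix $x\in B$ with associated positive plane $P(x)=\langle a,b\rangle$ (the ordered basis fixing the orientation), and I want to show that all $y$ in a small neighbourhood of $x$ in $B$ are strongly equivalent to $x$ as points in $B$.

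First I would reproduce the three-step chain from the proof of Proposition~\ref{prop:allequiv}: pick a vector $c$ with $\langle a,b,c\rangle$ a positive three-space and, using Remark~\ref{rem:generic}, chosen so that $c^\perp\cap\Lambda=0$; then all three-spaces $\langle a,b,c\rangle$, $\langle a,b',c\rangle$, $\langle a',b',c\rangle$ are positive (for $(a',b')$ near $(a,b)$) and all generic. This produces generic twistor lines $T_1,T_2,T_3$ through, respectively, the pairs $x=\langle a,b\rangle$ and $\langle a,c\rangle$; $\langle a,c\rangle$ and $\langle b',c\rangle$; $\langle b',c\rangle$ and $\langle a',b'\rangle$. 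Note that the intermediate point $\langle a,c\rangle$ is \emph{fixed} (independent of $(a',b')$), while $\langle b',c\rangle$ and $\langle a',b'\rangle$ vary continuously with $(a',b')$ and equal $\langle b,c\rangle$, resp.\ $x$, when $(a',b')=(a,b)$.

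The real content, and the main obstacle, is the localization: I must shrink the neighbourhood of $(a,b)$ so that, for every $(a',b')$ in it, the four points $x$, $\langle a,c\rangle$, $\langle b',c\rangle$, $\langle a',b'\rangle$ all lie in $B$ and, crucially, consecutive ones lie in the \emph{same connected component} of $T_i\cap B$. The first requires only continuity of $(a',b')\mapsto\langle a',b'\rangle$ etc.\ together with $x\in B$ and $\langle a,c\rangle$ possibly being outside $B$ — which is the genuine issue: the auxiliary point $\langle a,c\rangle$ need not be in $B$ at all. To fix this I would choose $c$ close to $b$, so that $\langle a,c\rangle$ is close to $\langle a,b\rangle=x$ and hence lies in $B$; the condition $c^\perp\cap\Lambda=0$ can still be arranged, since by Remark~\ref{rem:perioddense} such $c$ are dense. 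For the "same connected component" statements: $x$ and $\langle a,c\rangle$ both lie on $T_1$, and $T_1\cap B$ is an open subset of $T_1\cong\PP^1$; since $x$ and $\langle a,c\rangle$ can be joined by a short arc in $\PP(\langle a,b,c\rangle\otimes\CC)$ staying near $x$ (both being near $x$), that arc lies in $B$, so they are in the same component of $T_1\cap B$. The analogous argument applies to $T_2$ (joining $\langle a,c\rangle$ to $\langle b',c\rangle$, both near $\langle b,c\rangle$, which is near $x$ once $c$ is near $b$) and to $T_3$ (joining $\langle b',c\rangle$ to $\langle a',b'\rangle$). Making all these "near" statements simultaneous just means: first fix $c$ near $b$ with $c^\perp\cap\Lambda=0$ and with $\langle a,b,c\rangle$, $\langle b,c\rangle$-related points all in $B$; then choose the neighbourhood of $(a,b)$ small enough that all the perturbed points and connecting arcs remain in $B$. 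This gives a chain of three generic twistor lines witnessing strong equivalence of $x$ and $y=\langle a',b'\rangle$ as points in $B$, proving the class of $x$ is open and finishing the proof.
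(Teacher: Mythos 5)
Your overall strategy (openness of the strong equivalence classes plus connectedness of $B$) is the right one and matches the paper, and you correctly identify the genuine difficulty: the auxiliary points of the chain must be forced into $B$, and indeed into the same connected component of each $T_i\cap B$. But your fix does not work. In the three-line chain the second intermediate point is $\langle b',c\rangle$, and choosing $c$ close to $b$ does \emph{not} bring this point close to $x$. Writing $c=b+\epsilon u$ and $b'=b+\epsilon'w$, the plane $\langle b',c\rangle$ is approximately $\langle b,\,\epsilon u-\epsilon' w\rangle$: it is spanned by $b$ and a direction that has nothing to do with $a$, so it is in general far from $P(x)=\langle a,b\rangle$ in the Grassmannian, no matter how small $\epsilon$ is. (Your parenthetical claim that $\langle a,c\rangle$ and $\langle b',c\rangle$ are ``both near $\langle b,c\rangle$, which is near $x$ once $c$ is near $b$'' is false on both counts: $\langle a,c\rangle$ is near $\langle a,b\rangle$, and $\langle b,c\rangle$ is a nearly degenerate span that does not converge to anything near $x$ as $c\to b$.) The underlying obstruction is that the middle step of the three-line chain replaces the basis vector $a$ by $b'$, which is not a small perturbation, so that step necessarily leaves any prescribed small ball around $x$. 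Choosing $c$ far from $b$ instead would rescue $\langle b',c\rangle$ no better and would push $\langle a,c\rangle$ out of $B$.

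This is exactly why the paper abandons the three-line chain and uses a chain of \emph{four} generic twistor lines, with two auxiliary vectors: $c$ close to $b$ and $d$ close to $a$ (e.g.\ $c=e_2+(\varepsilon/2)e_3$, $d=e_1+(\varepsilon/2)e_3$ after an isometry sending $(a,b)$ to $(e_1,e_2)$, perturbed to be generic). The intermediate points are then $\langle a,c\rangle$, $\langle a,b'\rangle$, $\langle d,b'\rangle$: each consecutive pair shares one basis vector and differs by a small perturbation of the other, so all of them stay in a neighbourhood $U_\varepsilon\subset B$ of $x$, and explicit segments such as $\langle a,\,c+t(b'-c)\rangle$ verify the ``same connected component'' condition. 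To repair your argument you would need to insert this extra step (or an equivalent device); as written, the chain you propose cannot be confined to $B$.
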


\begin{proof}
One again shows that each equivalence class is open which together with the connectedness of the ball $B$ proves the result. The proof is a modification of the argument for Proposition \ref{prop:allequiv} and we shall
use the same notation. The main difference is that a  given positive plane
$\langle a,b\rangle$  is connected to any nearby point by a chain of four generic  twistor lines instead of just three.
As the proof given here deviates from the original more technical one in \cite{Verb}, we
shall spell out all the (mostly elementary) details.

Let $x\in B$ and let $a,b$ be an oriented basis of the associated plane $P(x)$. The open sets
$U_\varepsilon:=\{\langle a',b'\rangle~|~\|a-a'\|<\varepsilon,~\|b-b'\|<\varepsilon\}$ form
a basis of open neighbourhoods of $x\in D$. Here $\|~\|$ is an arbitrary fixed norm on the real vector space
$\Lambda\otimes\RR$. Strictly speaking, $U_\varepsilon$ is an open set of
planes, but when we write $\langle a',b'\rangle\in U_\varepsilon$ we implicitly mean
that the vectors $a',b'$ satisfy the two inequalities defining $U_\varepsilon$.

Fix $0<\varepsilon<1$ small enough such that $x\in U_{\varepsilon}\subset B$. 
Then there exist $\langle d,c\rangle\in U_{\varepsilon}$
such that $\langle a,b,c\rangle$, $\langle a,b,d\rangle$ are positive
three-spaces. Indeed, if an isometry  \mbox{$\Lambda\otimes\RR\cong \RR^b$} is chosen such that $(a,b)=(e_1,e_2)$,
then take $c=e_2+(\varepsilon/2)e_3$ and \mbox{$d=e_1+(\varepsilon/2)e_3$.} Here $e_1,\ldots,e_b$ is the standard basis
of $\RR^b$ endowed with the quadratic form ${\rm diag}(1,1,1,-1,\ldots,-1)$. Moreover,
after adding small generic vectors, we can assume that $c^\perp\cap\Lambda=0=d^\perp\cap\Lambda$.

To be a positive three-space is an open condition. Thus there exists
$0<\delta<\varepsilon$ such that for all $\langle a',b'\rangle\in U_\delta\subset U_\varepsilon\subset B$
the spaces $\langle a',b',c\rangle$ and $\langle a',b',d\rangle$ are still positive three-spaces.

\vskip1.5cm

$$\hskip3cm
\begin{picture}(-100,100)
\put(-130,80){\line(6,0){200}}
\put(-130,45){\line(3,1){180}}
\put(-220,80){\line(4,-1){180}}
\put(-228,70){\line(4,1){180}}

\put(-121,87){\makebox{$x$}}
\put(-30,65){\makebox{$\langle a,c\rangle$}}
\put(-119,36){\makebox{$\langle a,b'\rangle$}}
\put(-215,55){\makebox{$\langle d,b'\rangle$}}
\put(-120,110){\makebox{$\langle a',b'\rangle$}}

\put(75,80){\makebox{$T_1$}}
\put(55,110){\makebox{$T_2$}}
\put(-30,25){\makebox{$T_3$}}
\put(-42,118){\makebox{$T_4$}}

\put(-116,80.){\circle*{3}}
\put(-25,80){\circle*{3}}
\put(-109,52){\circle*{3}}
\put(-204.5,76){\circle*{3}}
\put(-90,104.5){\circle*{3}}

\put(-190,30){\setlength{\unitlength}{0.9cm}
  \linethickness{0.075mm}
  \qbezier(4.9,2.0)(4.9,2.787)(4.3435,3.3435)
  \qbezier(4.3435,3.3435)(3.787,3.9)(3.0,3.9)
  \qbezier(3.0,3.9)(2.213,3.9)(1.6565,3.3435)
  \qbezier(1.6565,3.3435)(1.1,2.787)(1.1,2.0)
  \qbezier(1.1,2.0)(1.1,1.213)(1.6565,0.6565)
  \qbezier(1.6565,0.6565)(2.213,0.1)(3.0,0.1)
  \qbezier(3.0,0.1)(3.787,0.1)(4.3435,0.6565)
  \qbezier(4.3435,0.6565)(4.9,1.213)(4.9,2.0)} 
\put(-130,-5){\makebox{$U_{\varepsilon}$}}
\put(-322,-30){\setlength{\unitlength}{2.5cm}
  \linethickness{0.055mm}
  \qbezier(1.1,2.0)(1.1,1.213)(1.6565,0.6565)
  \qbezier(1.6565,0.6565)(2.213,0.1)(3.0,0.1)
  \qbezier(3.0,0.1)(3.787,0.1)(4.3435,0.6565)
  \qbezier(4.3435,0.6565)(4.9,1.213)(4.9,2.0)} 
\end{picture}
$$

\vskip1.8cm

For any given $\langle a',b'\rangle\in U_\delta$, let $T_1,\ldots,T_4$   be the generic (!) \!twistor lines
associated to the four positive three-spaces
$\langle a,b,c\rangle$, $\langle a,b',c\rangle$, $\langle a,b',d\rangle$, resp.\
$\langle a',b',d\rangle$. (Use that indeed
$\langle a,b\rangle,\langle a,b'\rangle,\langle a',b'\rangle\in U_\delta$.)

Let $x_1:=x=\langle a,b\rangle$, $x_2:=\langle a,c\rangle$, $x_3:=\langle a,b'\rangle$,
$x_4:=\langle d,b'\rangle$,
and $x_5:=\langle a',b'\rangle$. Then $x_i,x_{i+1}\in T_i\cap B$. This would show that $x$ and $\langle a',b'\rangle$
are strongly equivalent as points in~$B$ if  indeed $x_i$ and $x_{i+1}$
are contained in the same connected component of $T_i\cap B$ (in fact, as we will see,
of $T_i\cap U_{\varepsilon}$).
The verification of this is straightforward. E.g.\ $x_2=\langle a,c\rangle$ and $x_3=\langle a,b'\rangle$
can be connected via $\langle a,c+t(b'-c)\rangle$ with $t\in[0,1]$. This path is contained
in $T_2\cap B$ as $\|c+t(b'-c)-b\|=\|(1-t)(c-b)+t(b'-b)\|<(1-t)\varepsilon+t\delta\leq\varepsilon$.
\end{proof}

A much easier related observation is the following:

\begin{lemm}\label{lem:mucheasier}
Consider a ball $B\subset\bar B\subset D$ and let $x\in\bar B\setminus B$. Then there exists a generic twistor line $T_W\subset D$ such that $x\in\partial B\cap T_W$ is in the boundary of $B\cap T_W$. In other words, the boundary of $B$ can be connected to its interior
by means of generic  twistor lines.
\end{lemm}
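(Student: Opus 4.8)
The plan is to mimic the construction in the proof of Proposition \ref{prop:localstrong}, but starting from a boundary point rather than an interior one. Let $x\in\bar B\setminus B$ and let $P(x)=\langle a,b\rangle$ be the associated oriented positive plane. First I would pick a vector $c\in\Lambda\otimes\RR$ such that $\langle a,b,c\rangle$ is a positive three-space and, after perturbing $c$ by a small generic vector (using that $\Lambda$ is countable, as in Remark \ref{rem:perioddense}), also $c^\perp\cap\Lambda=0$. Set $W:=\langle a,b,c\rangle$, so that $T_W$ is a generic twistor line by Remark \ref{rem:generic}, and note $x=\langle a,b\rangle\in T_W$.

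It remains to check that $x$ lies in the boundary of $B\cap T_W$, i.e.\ that arbitrarily close to $x$ inside $T_W$ there are points of $B$. This is where one uses that $B$ is the open interior of a closed ball $\bar B$ in a differentiable chart and that $x\in\partial B$: the interior $B$ accumulates at every boundary point. Concretely, in the chart identify a neighbourhood of $x$ with an open subset of $\mathbb{R}^{b-2}$ via the period coordinates, so that $\bar B$ becomes (the closure of) a round ball and $\partial B$ a smooth hypersurface; since $T_W$ is a complex submanifold through $x$ of positive dimension, its image is a nonconstant analytic arc through the boundary point $x$, and such an arc cannot be entirely contained in $\bar B^c\cup\partial B$ near $x$ — it must enter the open ball $B$. (Alternatively, and more elementarily: parametrise $T_W$ near $x$ by $t\mapsto\langle a,b+tc\rangle$ for small real $t$; since $x\in\partial B$ and $B$ is open, for a suitable sign of $t$ the nearby plane $\langle a,b+tc\rangle$ lies in $B$, because moving into the ball from a boundary point along any transverse-or-generic direction lands inside. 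One only needs that this arc is not tangent to $\partial B$ to all orders, which is automatic since $\partial B$ is real-analytic-free — it suffices to choose $c$ generically so that the arc is not contained in the round sphere $\partial B$.) Either way one exhibits points of $B\cap T_W$ converging to $x$, so $x\in\partial(B\cap T_W)$.

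The main obstacle, and the only point requiring a little care, is the transversality/non-tangency of the twistor arc to $\partial B$ at $x$: one must rule out the pathological case where the whole local branch of $T_W$ through $x$ stays outside the open ball. I would handle this by noting that the sphere $\partial B$ (in the chosen chart) is a real hypersurface containing no nonconstant complex-analytic arc — equivalently, a nonconstant holomorphic arc through a point of $\partial B$ always meets the interior — so since $T_W$ is a genuine $1$-dimensional complex submanifold of $D$ through $x$, it cannot be locally contained in $\bar B^c\cup\partial B$. Thus $x$ is in the boundary of $B\cap T_W$, and $T_W$ is generic by construction, which is exactly the assertion of the lemma. \qqed
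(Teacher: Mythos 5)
Your construction of the generic twistor line is the same as the paper's: extend $P(x)$ to a positive three-space $W$ with $W^\perp\cap\Lambda=0$, so that $T_W$ is a generic twistor line through $x$. The gap is in the step where you argue that $T_W$ actually meets the open ball $B$ near $x$. The claim you lean on --- that a nonconstant holomorphic arc through a point of $\partial B$ must enter $B$ --- is false, and it is not equivalent to the (true) statement that the sphere contains no nonconstant analytic arc: the arc may touch $\partial B$ at $x$ and stay in the closed complement of $B$. For the round unit ball in $\CC^2$ and the boundary point $(1,0)$, the complex line $w\mapsto(1,w)$ is a nonconstant holomorphic arc through $(1,0)$ meeting the closed ball only in that one point; it is tangent to the sphere and never enters $B$. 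Strict pseudoconvexity of the sphere is precisely the statement that holomorphic discs can touch it from outside, so ``not contained in $\partial B$'' does not yield ``meets $B$''; nor does ``not tangent to all orders'' help, since the example above is tangent only to first order and still misses $B$. (In the actual setting $\partial B$ is merely a smooth hypersurface in a differentiable chart, so the complex-analytic dichotomy you invoke is even less available.)

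What is needed, and what the paper's proof supplies, is transversality of $T_W$ to $\partial B$ at $x$, imposed as part of the choice of $W$. Identifying $T_xD$ with $\Hom(P(x),\Lambda\otimes\RR/P(x))$, the tangent space of $T_W$ at $x$ is the two-dimensional real subspace $\Hom(P(x),\RR\alpha)$, where $W=P(x)\oplus\RR\alpha$ with $\alpha\in P(x)^\perp$ positive. Since $T_x\partial B$ is a real hyperplane in $T_xD$, the condition $\Hom(P(x),\RR\alpha)\not\subset T_x\partial B$ holds for generic $\alpha$ and can be imposed simultaneously with $\alpha^\perp\cap\Lambda=0$. With this extra genericity requirement on your vector $c$ (equivalently on its component $\alpha$ orthogonal to $P(x)$), the twistor line crosses $\partial B$ transversally at $x$ and therefore meets $B$; without it, your argument does not close.
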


\begin{proof}
Consider the tangent space $T_x \partial B$ of the boundary $\partial B$ in the point $x\in\partial B$. Then
$T_x\partial B$ is a subspace of real dimension $2b-5$ of the tangent space $T_xD$ which is of real
dimension $2b-4$. Describing  $D$ as the Grassmannian of oriented positive planes, yields 
a natural identification of  $T_xD$ with $\Hom(P(x),\Lambda\otimes\RR/P(x))$. 

Under this identification, the tangent space of a twistor line $T_W$ through $x$ corresponds
to the two-dimensional real subspace $\Hom(P(x),\RR\alpha)$, where we write $W=P(x)\oplus \RR\alpha$ 
for some positive $\alpha\in P(x)^\perp$ and
identify $\Lambda\otimes\RR/P(x)$ with $P(x)^\perp$. Conversely, the choice of $\alpha$ defines a twistor line through $x$ and if $\alpha$ is chosen generically then $\alpha^\perp\cap\Lambda=0$ and $\Hom(P(x),\RR\alpha)\not\subset T_x\partial B$, i.e.\ the corresponding $T_W$ is a generic twistor line through $x$ with $T_W\cap B\ne\emptyset$.
\end{proof}

\section{Period map}

\subsection{Local Torelli theorem}\label{sec:LT}

For any compact complex manifold $X$ there exists a versal deformation $\kx\to\Def(X)$.
As usual, $\Def(X)$ is understood as a germ of a complex space which can be chosen arbitrarily small.
Since $H^0(X,\kt_X)=0$ for $X$ a compact hyperk\"ahler manifold, the deformation
is in this case in fact universal. Moreover, since any small deformation of $X$ is again compact hyperk\"ahler,
one has $h^1(\kx_t,\kt_{\kx_t})= h^1(\kx_t,\Omega_{\kx_t})=h^{1,1}(\kx_t)\equiv {\rm const}$ and hence
$\kx\to \Def(X)$ is  universal for any of its fibers $\kx_t$. For a survey of Kuranishi's results on deformation theory
see e.g.\ \cite{Dou}.

Although $H^2(X,\kt_X)$ need not be trivial
for a compact hyperk\"ahler manifold, the base $\Def(X)$ is smooth of dimension $b_2(X)-2=h^1(X,\kt_X)$.
This is a result of Bogomolov \cite{Bog}, which can also be seen as a special case of the Tian--Todorov
unobstructedness result for K\"ahler manifolds with trivial canonical bundle.

Classical Hodge theory provides us with a holomorphic map $$\kp:\Def(X)\to \PP(H^2(X,\ZZ)),~t\mapsto
[H^{2,0}(\kx_t)]$$ for which one uses the canonical identification $H^2(X,\ZZ)\cong H^2(\kx_t,\ZZ)$
via parallel transport
which respects the Beauville--Bogomolov forms $q_X$ resp.\ $q_{\kx_t}$.
As $q_X(\sigma_{\kx_t})=q_{\kx_t}(\sigma_{\kx_t})=0$ and $q_X(\sigma_{\kx_t},\bar\sigma_{\kx_t})=
q_{\kx_t}(\sigma_{\kx_t},\bar\sigma_{\kx_t})>0$, the period map takes values in the \emph{period domain}
$$D_X:=D_{H^2(X,\ZZ)}\subset \PP(H^2(X,\CC)).$$ 
The following result was proved  in \cite{BeauvJDG}.

\begin{theo}[Local Torelli theorem]\label{thm:LT}
The period map $$\kp:\Def(X)\to D_X\subset \PP(H^2(X,\CC))$$
is  biholomorphic onto an open subset of the period domain $D_X$.\qqed
\end{theo}

\subsection{Moduli space of marked hyperk\"ahler manifolds}\label{sect:msmarked}

For any given non-degenerate lattice $\Lambda$ of signature $(3,b-3)$
one defines the \emph{moduli space of $\Lambda$-marked hyperk\"ahler manifolds}
as $$\gm_\Lambda:=\{(X,\phi)\}/_{\cong}.$$
Here, $X$ is a compact hyperk\"ahler manifold   and
$\phi:H^2(X,\ZZ)\cong\Lambda$ is an isometry between $H^2(X,\ZZ)$ endowed with the Beauville--Bogomolov pairing 
and the lattice~$\Lambda$.  Two $\Lambda$-marked hyperk\"ahler manifolds
$(X,\phi)$ and $(X',\phi')$ are isomorphic, $(X,\phi)\sim(X',\phi')$, if there exists a biholomorphic map
$g:X\congpf X'$ such that $\phi\circ g^*=\phi'$.

\begin{rema}For most lattices $\Lambda$ one expects $\gm_\Lambda=\emptyset$; at least very few lattices are
known that are realized.
On the other hand, in all known examples the lattice~$\Lambda$ determines the diffeomorphism type of $X$. The
latter suggests to actually fix the underlying real manifold $M$, to put $\Lambda=H^2(M,\ZZ)$, and to consider
$$\gm_M:=\{(X,\phi)~|~X\sim_{{\mbox{\tiny diff}}}M\}/_{\cong}\subset\gm_{\Lambda}$$ as the space of marked hyperk\"ahler manifolds $X$ diffeomorphic to $M$ (but without fixing the diffeomorphism). 
As we shall eventually restrict to a connected component of the moduli space and $\gm_M$ is a union of connected components of $\gm_{\Lambda}$,
one can work with either of the two moduli spaces,   $\gm_M$ or $\gm_\Lambda$.
\end{rema}

The following is a well-known generalization of a construction used for K3 surfaces
(see \cite{BeauvAst} or \cite[Prop.\ 7.7]{HuyHab}).

\begin{prop}
The moduli space of $\Lambda$-marked hyperk\"ahler manifolds $\gm_\Lambda$ has the structure of a complex
manifold of dimension $b-2$. For any $(X,\phi)\in\gm_\Lambda$, there is a natural
holomorphic map $\Def(X)\hookrightarrow \gm_\Lambda$ identifying $\Def(X)$ with an open neighbourhood
of $(X,\phi)$ in $\gm_\Lambda$.
\end{prop}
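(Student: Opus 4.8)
The plan is to build an atlas on $\gm_\Lambda$ out of the Kuranishi families $\Def(X)$, using the Local Torelli theorem to get injectivity and the universality of $\Def(X)$ for \emph{all} its fibres to get biholomorphic transition functions. I do not try to prove that $\gm_\Lambda$ is Hausdorff — it is not — so throughout ``complex manifold'' means a space carrying a holomorphic atlas with charts open in $\CC^{b-2}$. To set up the charts, fix $(X,\phi)\in\gm_\Lambda$ and a representative of the universal deformation $\kx\to\Def(X)$. By Bogomolov's theorem $\Def(X)$ is smooth of dimension $b-2$, hence a neighbourhood of a point in $\CC^{b-2}$ and in particular simply connected; parallel transport along paths in $\Def(X)$ is therefore path-independent and gives a well-defined isometry $p_t\colon H^2(\kx_t,\ZZ)\congpf H^2(X,\ZZ)$ (an isometry by the deformation-invariance of the Beauville--Bogomolov form recalled in Section~\ref{sect:Defi}). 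Post-composing with $\phi$ yields a marking $\phi_t:=\phi\circ p_t$ of $\kx_t$, and since a small deformation of a compact hyperk\"ahler manifold is again one, $(\kx_t,\phi_t)\in\gm_\Lambda$. This produces a map $\iota_{(X,\phi)}\colon\Def(X)\to\gm_\Lambda$, $t\mapsto(\kx_t,\phi_t)$, with $\iota_{(X,\phi)}(0)=(X,\phi)$; these maps clearly cover $\gm_\Lambda$, and the complex structure on $\gm_\Lambda$ will be the unique one turning every $\iota_{(X,\phi)}$ into an open embedding, once compatibility is checked.

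Next I would check that, after shrinking, each $\iota_{(X,\phi)}$ is injective. If $(\kx_t,\phi_t)\cong(\kx_s,\phi_s)$, realised by $g\colon\kx_t\congpf\kx_s$ with $\phi_t\circ g^*=\phi_s$, then $\phi\circ p_t\circ g^*=\phi\circ p_s$ forces $g^*=p_t^{-1}\circ p_s$; applying $p_t$ to the equality $g^*(H^{2,0}(\kx_s))=H^{2,0}(\kx_t)$ gives $p_s(H^{2,0}(\kx_s))=p_t(H^{2,0}(\kx_t))$, that is $\kp(s)=\kp(t)$ in $D_X$. Since $\kp\colon\Def(X)\to D_X$ is an open embedding by Theorem~\ref{thm:LT}, this forces $s=t$.

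Finally, the transition maps. Suppose the images of two charts overlap, say $\iota_{(X,\phi)}(t)=\iota_{(Y,\psi)}(u)$, witnessed by an isomorphism $g\colon\kx_t\congpf\ky_u$ matching the markings. Because $\ky\to\Def(Y)$ is universal at its fibre $\ky_u$, the family $\kx\to\Def(X)$ near $t$ is classified by a germ of holomorphic map $f\colon(\Def(X),t)\to(\Def(Y),u)$ together with an isomorphism of families over a connected neighbourhood $V\ni t$ restricting to $g$ over $t$. This family isomorphism intertwines parallel transport, so the induced fibrewise isomorphisms $\kx_{t'}\congpf\ky_{f(t')}$ match the markings $\phi_{t'}$ and $\psi_{f(t')}$ for every $t'\in V$ (the relevant identities hold over $t$ and are identities of locally constant systems over the connected set $V$); hence $\iota_{(Y,\psi)}\circ f=\iota_{(X,\phi)}$ on $V$. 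Exchanging the roles of the two charts gives a germ $f'\colon(\Def(Y),u)\to(\Def(X),t)$ with $\iota_{(X,\phi)}\circ f'=\iota_{(Y,\psi)}$; the germs $f'\circ f$ and $f\circ f'$ then fix their base points and are compatible with the now-injective $\iota$'s, so they are the identity and $f$ is biholomorphic. Consequently the $\iota_{(X,\phi)}$ form a holomorphic atlas and exhibit $\gm_\Lambda$ as a complex manifold of dimension $b-2$ in which each $\Def(X)$ sits, via $\iota_{(X,\phi)}$, as an open neighbourhood of $(X,\phi)$. I expect the main obstacle to be precisely this last step: extracting from the universality of the Kuranishi family not just an abstract classifying map but a family isomorphism that is uniform over a neighbourhood, and tracking that it respects the markings; the other ingredients — smoothness and dimension of $\Def(X)$ (Bogomolov), invariance of $q_X$, and the open-embedding property of $\kp$ (Local Torelli) — are quoted from the results recalled above.
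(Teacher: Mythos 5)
Your proposal is correct and follows essentially the same route as the paper: charts from the Kuranishi families $\Def(X)$ (smooth of dimension $b-2$ by Bogomolov), injectivity of each chart from the Local Torelli theorem, and biholomorphic transition maps from the fact that $\kx\to\Def(X)$ is universal for every one of its fibres. You merely spell out in more detail the points the paper leaves implicit (parallel transport defining the induced markings, and the verification that the classifying maps respect the markings and are mutually inverse).
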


\begin{proof}
 The base of  the universal deformation  $\kx\to \Def(X)$ of a compact hyperk\"ahler manifold $X$
 parametrized by $\gm_\Lambda$  can be thought of as a small disk of dimension $b-2$.
A marking $\phi$ of $X$ naturally induces markings $\phi_t$ of all
the fibers $\kx_t$ and by the Local Torelli Theorem \ref{thm:LT}
the period map $\kp:\Def(X)\to D\subset\PP(\Lambda\otimes\CC)$ defined
by $\phi$ is a local isomorphism. Hence, for $t_0\ne t_1\in\Def(X)$ the fibers
$\kx_{t_0}$ and $\kx_{t_1}$ with the markings $\phi_{t_0}$ and $\phi_{t_1}$, respectively, are non-isomorphic as marked manifolds.

Thus, the base space $\Def(X)$ can be regarded as a subset of $\gm_\Lambda$
containing $(X,\phi)$. For $(X,\phi),(X',\phi')\in\gm_\Lambda$
consider the intersection $\Def(X)\cap\Def(X')$, which might of course be empty.
Since $\kx\to \Def(X)$ is a universal deformation for each of its fibers $\kx_t$ (and similarly for
$\kx'\to\Def(X')$), this is an open  subset
of $\Def(X)$ and $\Def(X')$ on which the two induced complex structures coincide. Therefore, the complex structures of the
deformation spaces $\Def(X)$ for all $X$ parametrized by $\gm_\Lambda$ glue to a complex structure on $\gm_\Lambda$.
Since $\Def(X)$ is smooth, also $\gm_\Lambda$ is smooth.
\end{proof}

\begin{rema}
Two words of warning are in order at this point. Firstly, $\gm_\Lambda$ is a complex manifold but
in general it is not  Hausdorff. Second, the universal families $\kx\to \Def(X)$ and $\kx'\to\Def(X')$
do not necessarily glue over the intersection $\Def(X)\cap\Def(X')$ in $\gm_\Lambda$. This is due to
the possible existence of automorphisms acting trivially on the second cohomology. See \cite{BeauvBirk}
for explicit examples.
\end{rema}

By the very construction of the complex structure on $\gm_\Lambda$, the local period
maps $\kp:\Def(X)\to D_X\subset\PP^2(H^2(X,\CC))$ glue to the \emph{global period map}
$$\kp:\gm_\Lambda\to \PP(\Lambda\otimes\CC).$$ The global period map takes values in the period
domain $D\subset\PP(\Lambda\otimes\CC)$ (see Section \ref{subsec:perioddomain}) and the Local Torelli
Theorem \ref{thm:LT} immediately gives

\begin{coro}
The
 period map $\kp:\gm_\Lambda\to D$ is locally biholomorphic.\qqed
\end{coro}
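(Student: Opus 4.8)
The plan is to deduce the statement immediately from the Local Torelli Theorem \ref{thm:LT} together with the description of the complex structure on $\gm_\Lambda$ established in the preceding proposition. Being locally biholomorphic is a local property of the map, so I would fix an arbitrary point $(X,\phi)\in\gm_\Lambda$ and argue in a neighbourhood of it.

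First I would invoke the preceding proposition: the natural holomorphic map $\Def(X)\hookrightarrow\gm_\Lambda$ identifies $\Def(X)$, as a complex manifold, with an open neighbourhood $V$ of $(X,\phi)$ in $\gm_\Lambda$. Next I would recall that the marking $\phi:H^2(X,\ZZ)\congpf\Lambda$, being an isometry, induces an isomorphism of period domains $D_X=D_{H^2(X,\ZZ)}\congpf D_\Lambda=D$, compatible with the linear isomorphism $\PP(H^2(X,\CC))\congpf\PP(\Lambda\otimes\CC)$. By the very construction of the global period map $\kp$ — it was defined precisely by gluing the local period maps $\Def(X)\to D_X$ — the restriction $\kp|_V$ coincides, under the identifications $V\cong\Def(X)$ and $D_X\cong D$, with the local period map of Theorem \ref{thm:LT}. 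That theorem asserts this local period map is biholomorphic onto an open subset of $D_X$, hence $\kp|_V$ is biholomorphic onto an open subset of $D$. Since $(X,\phi)$ was arbitrary, $\kp$ is locally biholomorphic.

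There is essentially no obstacle here: the content is entirely contained in Theorem \ref{thm:LT} and in the construction of the complex structure on $\gm_\Lambda$, and the only point requiring a moment's care is the purely formal verification that the marking intertwines the two period domains and that the glued global map really does restrict to the local one on each chart $\Def(X)$. This is bookkeeping rather than a genuine difficulty, so the corollary is immediate.
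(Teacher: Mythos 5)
Your argument is exactly the paper's: the corollary is stated there with an immediate \qqed, justified by the preceding remark that the global period map is by construction glued from the local period maps $\Def(X)\to D_X$, which are biholomorphic onto open subsets by the Local Torelli Theorem \ref{thm:LT}. Your only addition is to spell out the bookkeeping (the marking intertwining $D_X$ with $D$), which is correct and harmless.
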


\subsection{The  moduli space  is made Hausdorff}

As alluded to before, the moduli space of marked hyperk\"ahler manifolds
$\gm_\Lambda$ need not be Hausdorff. This phenomenon is well-known for
K3 surfaces  and it cannot be avoided in higher dimensions either.

Recall that a topological space $A$ is \emph{Hausdorff} if for any two points $x\ne y\in A$ there exist
disjoint open sets $x\in U_x\subset A$ and $y\in U_y\subset A$. If $U_x\cap U_y\ne\emptyset$
for all open neighbourhoods $x\in U_x$ and $y\in U_y$, then $x$ and $y$ are called \emph{inseparable} and we write
$x\sim y$. Clearly, $x\sim x$ for all $x$ and $x\sim y$ if and only if $y\sim x$, i.e.\ $\sim$ is reflexive and symmetric.
But in general $x\sim y\sim z$ does not imply $x\sim z$, i.e.\ $\sim$ may fail to be transitive, in which
case it is not  an equivalence relation. 

Restricting to our situation at hand, we shall define an a priori stronger relation as follows.

\begin{defi}\label{def:approx}
For $x,y\in \gm_\Lambda$ with  $\kp(x)=\kp(y)\in D$ we say  $x\approx y$ if
there exist an open neighbourhood $U$ of $0:=\kp(x)=\kp(y)\in D$ and 
holomorphic sections $s_x,s_y$
of $\kp:\kp^{-1}(U)\to U$ such that:

{\rm  i)} $s_x=s_y$ on a dense open subset 
$U_0\subset U$ and

{\rm  ii)} $s_x(0)=x$ and $s_y(0)=y$.
\end{defi}

In order to show that $\sim$ and $\approx$ actually coincide, we need to recall the following result from \cite{HuyInv}.

\begin{prop}\label{prop:nonsepbirat}
Suppose $(X,\phi)$ and $(Y,\phi')$ correspond to inseparable distinct points
$x,y\in \gm_\Lambda$. Then $X$ and $Y$ are bimeromorphic and $\kp(x)=\kp(y)$ is contained
in $D\cap\alpha^\perp$ for some $0\ne\alpha\in \Lambda$.
\end{prop}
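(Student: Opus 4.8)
The plan is to produce a bimeromorphic map between $X$ and $Y$ directly from the failure of separability, using a one-parameter degeneration argument. First I would invoke the Local Torelli Theorem \ref{thm:LT}: since $\kp$ is locally biholomorphic at both $x$ and $y$, there is an open neighbourhood $U\subset D$ of $0$ and local sections $s_x,s_y$ of $\kp$ through $x$ and $y$ respectively; concretely, $U$ can be identified (via $\kp$) both with a neighbourhood of $x$ in $\Def(X)\subset\gm_\Lambda$ and with a neighbourhood of $y$ in $\Def(Y)\subset\gm_\Lambda$. Inseparability of $x$ and $y$ means these two copies of $U$ cannot be disjoint in $\gm_\Lambda$ no matter how small $U$ is, so there is a sequence $t_n\to 0$ in $U$ with $s_x(t_n)=s_y(t_n)$ as points of $\gm_\Lambda$, i.e.\ the marked manifolds $(\kx_{t_n},\phi_{t_n})$ and $(\ky_{t_n},\phi'_{t_n})$ are isomorphic. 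Shrinking and taking $U$ to be a polydisk, one can in fact arrange that $s_x=s_y$ on a (dense) open subset $U_0\subset U$: a pair of distinct local holomorphic sections of the étale map $\kp$ either agree on an open set or have their coincidence locus contained in a proper analytic subset, and the accumulating equalities force the former. This is the passage from $\sim$ to $\approx$ that the text announces, and it already gives the isomorphisms $g_t:\kx_t\congpf\ky_t$ for $t\in U_0$ compatible with markings.

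Next I would globalize these fibrewise isomorphisms. Over $U_0$ the universal families $\kx|_{U_0}$ and $\ky|_{U_0}$ are isomorphic (at least after possibly shrinking $U_0$, by rigidity of hyperkähler manifolds and the fact that $H^0(X,\kt_X)=0$ makes the deformation universal, so a fibrewise iso respecting markings spreads out). Now I have a biholomorphism $G:\kx|_{U_0}\congpf\ky|_{U_0}$ over a Zariski-dense open $U_0$ of the polydisk $U$, and I want to extend it birationally across the central fibre, i.e.\ to get a bimeromorphic map $X=\kx_0\dashrightarrow\ky_0=Y$. The standard tool here is the theory of limits of cycles / the relative Barlet or Douady space: the graph of $G$ is a family of cycles $\Gamma_t\subset\kx_t\times\ky_t$ over $U_0$, each of the same (co)homology class, and by properness of the relevant cycle space over $U$ it extends to a limit cycle $\Gamma_0\subset X\times Y$ over the central point. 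One then argues that $\Gamma_0$ has a unique component dominating both factors (the others being contracted into the ``bad'' loci), and that this component defines a bimeromorphic correspondence $X\dashrightarrow Y$ — here one uses that $X$ and $Y$ are of the same dimension and that the generic $\Gamma_t$ is the graph of an isomorphism, so intersection-number / degree considerations pin down the multiplicity of the dominant component to be one. This is exactly the kind of argument that appears for K3 surfaces and was carried out in the hyperkähler setting in \cite{HuyInv}, so I would cite it rather than redo it.

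Finally, the statement about $\kp(x)$ lying on some $\alpha^\perp$: a bimeromorphic map between compact hyperkähler manifolds which is \emph{not} an isomorphism must be non-regular in codimension one, so it contracts some divisor; equivalently, $X$ and $Y$ are two distinct birational models and the wall between them corresponds to a class $\alpha\in H^2(X,\ZZ)$ with $q_X(\alpha)<0$ (an ``exceptional'' or wall class). Pulling this back via the marking, $\phi(\alpha)\in\Lambda$ is orthogonal to the period point: indeed $\alpha$ is a $(1,1)$-class on $\kx_t$ for all $t$ in the wall locus — in particular on $X$ itself, so $q_X(\sigma_X,\alpha)=0$, which is precisely the condition $\kp(x)\in\alpha^\perp$. (If instead the bimeromorphic map were everywhere defined it would be an isomorphism by the uniqueness of the Kähler model or directly because hyperkähler manifolds have no non-trivial contractions in codimension $\ge 2$ without a divisorial one, contradicting $x\ne y$; so the divisorial case is the only one.)

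\textbf{Main obstacle.} The genuinely delicate step is the middle one: passing from the fibrewise isomorphism over the dense open $U_0$ to an actual bimeromorphic map on the central fibres via limits of graphs/cycles, and controlling which components of the limit cycle survive. The point $\approx$-vs-$\sim$ equivalence and the final orthogonality statement are comparatively soft; the properness-of-cycle-space argument, and verifying that the limit correspondence is bimeromorphic rather than merely a nontrivial cycle, is where the real work — and the reliance on \cite{HuyInv} — lies.
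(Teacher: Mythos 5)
Your treatment of the first assertion follows the paper's route: a limit-of-graphs argument producing a cycle $\Gamma=Z+\sum Y_k\subset X\times Y$ with $Z$ a bimeromorphic correspondence and the $Y_k$ dominating neither factor, with the hard analytic work delegated to \cite[Thm.\ 4.3]{HuyInv}. That part is fine (one small caveat: the density of the coincidence locus $U_0$ is not automatic for sections of an \'etale map into a non-Hausdorff space --- in the paper it is \emph{deduced} from this very proposition in Proposition \ref{lem:simapprox}, ii) --- but you only need a nonempty open $U_0$ accumulating at $0$ to take the limit, so this is harmless).

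The proof of the second assertion, however, has a genuine gap. You argue that a bimeromorphic map between compact hyperk\"ahler manifolds which is not an isomorphism must fail to be regular in codimension one and hence contract a divisor. This is false: bimeromorphic hyperk\"ahler manifolds are isomorphic in codimension one, and the Mukai flop of a $\PP^2$ inside a hyperk\"ahler fourfold is a non-trivial bimeromorphic modification with no divisorial component whatsoever. Your parenthetical claim that there are ``no non-trivial contractions in codimension $\ge 2$ without a divisorial one'' is exactly what the Mukai flop contradicts. The paper's mechanism in this case is different and correct: if $Z$ is not the graph of an isomorphism, the curves contracted by $Z\to Y$ give nonzero classes in $H^{2n-1,2n-1}(X,\ZZ)$, whence $H^{1,1}(X,\ZZ)\ne0$ by duality, and this already forces $\kp(x)\in\alpha^\perp$. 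Moreover, you do not address the remaining case at all: $Z$ may well be the graph of an honest isomorphism $X\cong Y$ while $x\ne y$ in $\gm_\Lambda$ (the points differ because of the markings and the extra components $Y_k$). The paper disposes of this by computing the action of $[\Gamma]_*=[Z]_*+\sum[Y_k]_*$ on $\Lambda$: if every $Y_k$ has image of codimension $\ge2$ in both factors, then $[Z]_*$ equals $[\Gamma_{g_i}]_*=\id$, so $Z$ is a marking-compatible isomorphism and $x=y$, a contradiction; hence some $Y_k$ dominates a divisor and again $H^{1,1}(X,\ZZ)\ne0$. Without this case analysis your argument does not establish that $\kp(x)$ lies on some $\alpha^\perp$, which is precisely the half of the proposition that drives the density statement and the Hausdorff reduction later in the paper.
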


\begin{proof}
The first part is  \cite[Thm.\ 4.3]{HuyInv}. The bimeromorphic correspondence is
constructed roughly as follows. If $x\sim y$, then there exists a sequence $t_i\in \gm_\Lambda$ converging 
simultaneously to $x$ and $y$. For the universal families $\kx$ and $\ky$ of $X$, resp.\ $Y$, this
corresponds to isomorphisms $g_{i}:\kx_{t_i}\to\ky_{t_i}$  compatible with the markings
of $\kx_{t_i}$ and $\ky_{t_i}$.
Then the graphs $\Gamma_{g_i}$ are shown to converge to a cycle $\Gamma=Z+\sum Y_k\subset X\times Y$
of which the component $Z$  defines a bimeromorphic correspondence and the  components
$Y_k$ do not dominate neither of the two factors.

If $Z$ is not the graph of an isomorphism, then  the image in $X$
of  curves contracted by $Z\to Y$ describes curves in $X$. Thus $H^{2n-1,2n-1}(X,\ZZ)\ne0$ and
by duality also $H^{1,1}(X,\ZZ)\ne0$. Hence there exists a class $0\ne\alpha\in \Lambda$
with $\phi^{-1}(\alpha)\in H^{1,1}(X)$ and, therefore, $\kp(x)\in D\cap \alpha^\perp$.

Suppose $Z$ is the graph of an isomorphism. Consider the action of
 $[Z]_*+\sum[Y_k]_*$  on $\Lambda$
(via the given markings $\phi$ and $\phi'$). If the image of some $Y_k$ in $X$ and $Y$ is of codimension
$\geq 2$, then $[Y_k]_*$ acts trivially on $\Lambda$. If this is the case for all $[Y_k]$, then
$[Z]_*=[\Gamma]_*=[\Gamma_{g_i}]_*$ on $\Lambda$
and, since the $g_i$ are compatible with the markings, the latter is in fact the identity. But then
$Z$ is the graph of an isomorphism $X\cong Y$ that is compatible with the markings $\phi,\phi'$ and, therefore,
$x=y$. Contradiction.

If one of the $Y_k$ maps onto a divisor in $X$ or $Y$, then $H^{1,1}(X,\ZZ)\ne0$ or, equivalently,
$H^{1,1}(Y,\ZZ)\ne0$. So again in this case $\kp(x)=\kp(y)\in D\cap\alpha^\perp$ for some $0\ne\alpha\in \Lambda$.
\end{proof}

\begin{rema}\label{rema:Teichprob}
If one tries to apply the same argument to the Teichm\"uller space $\Teich(M)$, then 
one needs to show the following: If $Z$ defines an isomorphism and the $Y_i$ have images of 
codimension $\geq2$, then the isomorphism defined by $Z$ is in fact given by a diffeomorphism in 
the identity component $\Diff(M)_0$.
This issue was addressed in a more recent version of \cite{Verb}.

Note that for K3 surfaces  $Z$ always defines an isomorphism and that, 
a priori,  in higher dimensions   $Z$ could define an isomorphism without
any of  the components $Y_i$ mapping onto a divisor. 
\end{rema} 
 
\begin{prop}\label{lem:simapprox}
{\rm  i)} $\approx$ is an open equivalence relation.

{\rm  ii)}  $x\approx y$ if and only if $x\sim y$.

{\rm  iii)} $\sim$ is an open equivalence relation. 
\end{prop}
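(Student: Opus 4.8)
The strategy is to prove the three assertions in a slightly different logical order than stated: first establish the formal/topological facts about $\approx$ that hold for any holomorphically locally-trivial-type setup, then use Proposition \ref{prop:nonsepbirat} together with the Local Torelli Theorem \ref{thm:LT} to squeeze $\sim$ between $\approx$ and itself. Concretely, I would prove: (a) $\approx$ is reflexive, symmetric, and open; (b) $\approx \Rightarrow \sim$; (c) $\sim \Rightarrow \approx$; and then (d) deduce transitivity of $\approx$, hence that $\approx$ is an open equivalence relation (giving i)), hence ii) by (b)+(c), hence iii) since $\sim=\approx$.

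\emph{Step (a).} Reflexivity is clear (take $s_x=s_y$ the local section through $x$ given by $\kp^{-1}$ restricted to a small ball, which exists and is holomorphic because $\kp$ is locally biholomorphic by the corollary to Theorem \ref{thm:LT}); symmetry is immediate from the symmetry of the defining condition. For openness: suppose $x\approx y$ via sections $s_x,s_y$ over $U$ agreeing on a dense open $U_0$. Given $x'$ near $x$, write $x'=s_x(u')$ for some $u'$ near $0$; then $y':=s_y(u')$ satisfies $\kp(x')=\kp(y')=u'$, and the translated sections $u\mapsto s_x(u)$, $u\mapsto s_y(u)$ (restricted to a neighbourhood of $u'$) witness $x'\approx y'$, since they still agree on the dense open subset $U_0$ intersected with that neighbourhood. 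This shows the graph of $\approx$ is open in $\gm_\Lambda\times\gm_\Lambda$, i.e.\ $\approx$ is an open relation.

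\emph{Steps (b) and (c).} For (b), if $x\approx y$ via $s_x,s_y$ agreeing on the dense open $U_0\subset U$, then for any neighbourhoods $U_x\ni x$, $U_y\ni y$ we may shrink $U$ so that $s_x(U)\subset U_x$ and $s_y(U)\subset U_y$; picking any $u\in U_0$ gives $s_x(u)=s_y(u)\in U_x\cap U_y$, so $x\sim y$. For (c), the key input is Proposition \ref{prop:nonsepbirat}: if $x\sim y$ with $x\ne y$, then $\kp(x)=\kp(y)$, and the sequence $t_i\to x$, $t_i\to y$ produces, via the local biholomorphism $\kp$ near $0:=\kp(x)$, that the local inverse sections $s_x,s_y$ of $\kp$ through $x$ resp.\ $y$ satisfy $s_x(\kp(t_i))=t_i=s_y(\kp(t_i))$ for $i\gg0$; since $\kp(t_i)\to 0$ and $s_x,s_y$ agree on a sequence accumulating at $0$ inside a domain where both are defined, a careful argument is needed to upgrade this to agreement on a \emph{dense open} subset rather than merely a set with a limit point. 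Here one uses that the locus where two such local sections of $\kp$ agree is both closed and open in its domain away from the inseparability locus — more precisely, one invokes the description from Proposition \ref{prop:nonsepbirat}: off $\bigcup_{\alpha}\alpha^\perp$ the fibre of $\kp$ is a single point (since bimeromorphic hyperk\"ahler manifolds with no $(1,1)$-classes coincide as marked manifolds, cf.\ the argument in the proof of \ref{prop:nonsepbirat}), so $s_x$ and $s_y$ are forced to agree on $U\setminus\bigcup_\alpha\alpha^\perp$, which is dense open in $U$ by Remark \ref{rem:perioddense}. This is the step I expect to be the main obstacle: one must show the two a priori different holomorphic sections through $x$ and through $y$ genuinely coincide on a full dense open set, not just somewhere, and this relies essentially on the single-valuedness of the generic fibre extracted from \ref{prop:nonsepbirat}.

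\emph{Step (d).} Transitivity of $\approx$: if $x\approx y$ and $y\approx z$, choose common shrinkings so that both pairs of sections are defined over a single connected $U$ with $s_x=s_y$ on a dense open $U_0$ and $s_y=s_z$ on a dense open $U_1$; then $s_x=s_z$ on $U_0\cap U_1$, which is again dense open (a finite intersection of dense opens in a Baire space, or directly: the complement is contained in a union of two complements of dense opens), and $s_x(0)=x$, $s_z(0)=z$, so $x\approx z$. Combined with (a) this gives i). Then ii) is exactly (b) together with (c), and iii) follows since $\sim$ now equals the open equivalence relation $\approx$. I would close by remarking that (a) and (d) never used the hyperk\"ahler geometry — only that $\kp$ is locally biholomorphic — and it is solely in (c) that Proposition \ref{prop:nonsepbirat} enters, which is the conceptual heart of why inseparability is so rigid here.
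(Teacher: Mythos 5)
Your decomposition into (a)--(d) matches the paper's proof, and steps (a), (b) and (d) are essentially the arguments given there (in (d) note only that the section through $y$ witnessing $x\approx y$ and the one witnessing $y\approx z$ are a priori different sections through the same point; one must first observe that two sections of the local homeomorphism $\kp$ through $y$ agree on the open neighbourhood $\kp(s_y(U)\cap t_y(U))$ of $0$ before intersecting the two dense agreement loci). The genuine gap is in step (c), and it sits exactly where you predicted the main obstacle would be. Your mechanism for upgrading agreement on the sequence $\kp(t_i)$ to agreement on a dense open set is the assertion that over $U\setminus\bigcup_{0\ne\alpha\in\Lambda}\alpha^\perp$ the fibre of $\kp$ is a single point. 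That is not available at this stage: on a connected component it is precisely the conclusion of Theorem \ref{theo:Verb}, which is proved only at the very end via the covering space argument. What Proposition \ref{prop:nonsepbirat} gives is strictly weaker: two \emph{distinct inseparable} points have period in some $\alpha^\perp$. To apply it to the pair $s_x(u)$, $s_y(u)$ for generic $u$ you would first have to know that these two points are inseparable (or at least related by a marking-compatible bimeromorphic correspondence), and nothing in the hypothesis $x\sim y$ gives you that for $u\ne 0$.

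The paper's way around this is to apply Proposition \ref{prop:nonsepbirat} only at points where inseparability comes for free. Set $V:=s_x(U)\cap s_y(U)$ and $U_0:=\kp(V)$; this is exactly the (open) agreement locus of the two sections, and your observation that $s_x(\kp(t_i))=t_i=s_y(\kp(t_i))$ for $i\gg0$ shows it is non-empty. If $t\in\partial U_0$, then $s_x(t)$ and $s_y(t)$ are distinct (else $t\in U_0$) and inseparable, being common limits of the points $s_x(t_i)=s_y(t_i)$ with $t_i\in U_0$, $t_i\to t$; hence $\partial U_0\subset\bigcup_{0\ne\alpha\in\Lambda}\alpha^\perp$. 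Density of $U_0$ in $U$ then follows from a connectedness argument: a generic one-dimensional disk joining a point of $U_0$ to a hypothetical point of $U\setminus\overline{U_0}$ meets $\partial U_0$ in an at most countable set, which cannot separate two disjoint non-empty open subsets of the disk. So the conclusion you want in (c) is correct and the rest of your scheme goes through, but the justification has to run through the boundary of the agreement locus rather than through a pointwise uniqueness of fibres that has not yet been established.
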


\begin{proof} i)  
Again, $\approx$ is reflexive and symmetric by definition. Let us show that
it is also transitive. Assume $x\approx y\approx z$ and choose  for $x\approx y$ a neighbourhood
$0:=\kp(x)=\kp(y)\in U\subset D$ and sections $s_x,s_y$
as in Definition \ref{def:approx}. Similarly for $y\approx z$, choose a neighbourhood $0=\kp(y)=\kp(z)\in U'\subset D$ and sections 
$t_y,t_z$. Replacing $U$ and $U'$ by their intersection, we may in fact assume $U=U'$.

Then $s_y(U)$ and $t_y(U)$ are both open neighbourhoods of $y$ and 
hence $s_y(U)\cap t_y(U)$ is. Since $\kp$ is a local homeomorphism, this also shows that
$s_y$ and $t_y$ coincide on the open neighbourhood $\tilde U:=\kp(s_y(U)\cap t_y(U))\subset U$ of $0$.
Since $s_x$ and $s_y$ coincide on a dense open subset of $U$, they also coincide on a dense
open subset of $\tilde U$. Similarly for $t_y$ and $t_z$.
Together with $s_y|_{\tilde U}=t_y|_{\tilde U}$ this    shows $x\approx z$. 
 
Recall that an equivalence relation $\approx$ on a topological space $A$
is open if the projection $A\to A/_\approx$ is open.
Equivalently, an equivalence relation
$\approx$ is open if for all $x\approx y$ and any open neighbourhood $x\in V_x\subset A$, there exists
an open neighbourhood $y\in V_y\subset A$ such that for any $y'\in V_y$ one finds an $x'\in V_x$ with $x'\approx y'$.
In our case, let $V_y:=s_y(\kp(V_x)\cap U)$. Indeed, the dense open subset $U_0$ 
on which $s_x=s_y$ (see Definition \ref{def:approx},~ii)) intersects the image of $\kp(V_y)$ in
a dense open subset and hence $s_x(\kp(y'))\approx s_y(\kp(y'))$ for all $y'\in V_y$.

 \smallskip

ii) As $x\approx y$ clearly implies $x\sim y$, we only need to show the converse.
So let $x\sim y$. Then $0:=\kp(x)=\kp(y)$. Pick an open neighbourhood $0\in U\subset D$
of $0$ and holomorphic sections $s_x,s_y:U\to \gm_\Lambda$ with $s_x(0)=x$ and $s_y(0)=y$. 
Since $x\sim y$, the  intersection $V:=s_x(U)\cap s_y(U)$ cannot be empty.

In order to show that
$x\approx y$, it suffices to show that the open subset
$U_0:=\kp(V)$ is dense in $U$. If for $x'\in s_x(U)$ and $y'\in s_y(U)$ one has
$t:=\kp(x')=\kp(y')\in\partial U_0$
(the boundary of $U_0$ in $U$), then $x'\sim y'$ and $x'\ne y'$.

By Proposition \ref{prop:nonsepbirat} this implies that $t$ is contained in $D\cap \alpha^\perp$ for some class
$0\ne \alpha\in \Lambda$. Hence $\partial U_0\subset \bigcup_{0\ne\alpha\in \Lambda} \alpha^\perp$.
This is enough to conclude that $U_0$ is dense in $U$. Indeed, suppose $ U\setminus \overline U_0\ne\emptyset$. Then
connect a generic point in $U_0$ via a one-dimensional disk $\Delta\subset U$
with a generic point in the open subset $U\setminus \overline U_0$. Then
$\Delta\cap \partial U_0\subset\Delta\cap\bigcup_{0\ne\alpha\in \Lambda} \alpha^\perp$
is countable and can therefore not separate the two disjoint open sets $\Delta\cap U_0$ 
and $\Delta\cap (U\setminus \overline U_0)$. Contradiction. (Compare the arguments with
Remark \ref{rem:perioddense}.)

\smallskip

Obviously, iii) follows from i) and ii).
\end{proof}

\begin{coro}\label{cor:Hausms}
The period map  $\kp:\gm_\Lambda\to D\subset \PP(\Lambda\otimes\CC)$
factorizes over the  `Hausdorff  reduction' $\overline\gm_\Lambda$ of $\gm_\Lambda$.
More precisely, there exist a  complex Hausdorff manifold $\overline\gm_\Lambda$
and locally biholomorphic maps factorizing the period map:
$$\kp:\gm_\Lambda\twoheadrightarrow\overline\gm_\Lambda\to D,$$
such that $x=(X,\phi),y=(Y,\phi')\in\gm_\Lambda$ map to the same point in $\overline\gm_\Lambda$ if and only
if they are inseparable points of $\gm_\Lambda$, i.e.\ $x\sim y$.
\end{coro}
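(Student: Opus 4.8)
The plan is to define $\overline\gm_\Lambda$ to be literally the quotient $\gm_\Lambda/{\sim}$ equipped with the quotient topology, and to inherit all the required structure through the projection $\pi:\gm_\Lambda\to\overline\gm_\Lambda$. By Proposition \ref{lem:simapprox} the relation $\sim$ is an open equivalence relation, so $\pi$ is continuous, surjective and \emph{open}. Since $\kp$ is continuous and $D$ is Hausdorff, $x\sim y$ forces $\kp(x)=\kp(y)$; hence $\kp$ is constant on the fibres of $\pi$ and descends to a map $\bar\kp:\overline\gm_\Lambda\to D$ with $\kp=\bar\kp\circ\pi$, and $\bar\kp$ is continuous because $\pi$ is a quotient map. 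By construction $x=(X,\phi)$ and $y=(Y,\phi')$ have the same image in $\overline\gm_\Lambda$ exactly when $x\sim y$, as the statement demands.

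Next I would equip $\overline\gm_\Lambda$ with a complex structure. Fix $\bar z\in\overline\gm_\Lambda$, lift it to $z\in\gm_\Lambda$, and use that $\kp$ is locally biholomorphic to choose an open $U\subset D$ and a holomorphic section $s:U\to\gm_\Lambda$ of $\kp$ with $s(U)$ open and $\kp:s(U)\congpf U$. The elementary but crucial point is that $\pi$ is \emph{injective} on $s(U)$: if $\pi(s(t))=\pi(s(t'))$ then $s(t)\sim s(t')$, hence $t=\kp(s(t))=\kp(s(t'))=t'$. As $\pi$ is also open and continuous, $\pi|_{s(U)}$ is a homeomorphism onto the open set $\pi(s(U))\ni\bar z$, and $\bar\kp$ then restricts to a homeomorphism $\bar\kp:\pi(s(U))\congpf U$. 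Declaring these maps to be biholomorphic transports the complex structure of $U\subset D$ to $\overline\gm_\Lambda$; two such charts are related by a restriction of $\bar\kp$ followed by the inverse of another restriction of $\bar\kp$, so the transition is biholomorphic, and $\overline\gm_\Lambda$ becomes a complex manifold of dimension $b-2$ on which $\bar\kp$ is locally biholomorphic by construction and $\pi$ is locally biholomorphic because locally it equals $(\bar\kp|_{\pi(s(U))})^{-1}\circ\kp$.

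The hard part will be the Hausdorff property, although essentially all of its content has already been isolated in Propositions \ref{prop:nonsepbirat} and \ref{lem:simapprox}. Let $\bar x\neq\bar y$ in $\overline\gm_\Lambda$ and lift them to $x\not\sim y$ in $\gm_\Lambda$. If $\kp(x)\neq\kp(y)$, I would simply pull back disjoint neighbourhoods in the Hausdorff space $D$ along $\bar\kp$. So assume $\kp(x)=\kp(y)=:0$, and pick holomorphic sections $s_x,s_y:U\to\gm_\Lambda$ of $\kp$ over an open $U\ni 0$ with $s_x(0)=x$ and $s_y(0)=y$. Since the sets $s_x(U')$ and $s_y(U')$, for $0\in U'\subset U$ open, form neighbourhood bases of $x$ resp.\ $y$, and $\pi$ is open, it suffices to exhibit some $U'$ with $\pi(s_x(U'))\cap\pi(s_y(U'))=\emptyset$. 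Suppose not: then for every $U'$ there are $a\in s_x(U')$ and $b\in s_y(U')$ with $a\sim b$, forcing $\kp(a)=\kp(b)$, i.e.\ a point $t\in U'$ with $s_x(t)\sim s_y(t)$. Running through a countable neighbourhood basis of $0$ yields $t_i\to 0$ with $s_x(t_i)\sim s_y(t_i)$, while $s_x(t_i)\to x$ and $s_y(t_i)\to y$ by continuity of the sections. But inseparability passes to such limits: for any open $N\ni x$ and $M\ni y$ one has $s_x(t_i)\in N$ and $s_y(t_i)\in M$ for $i\gg 0$, so $s_x(t_i)\sim s_y(t_i)$ gives $N\cap M\neq\emptyset$; hence $x\sim y$, a contradiction. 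Therefore $\overline\gm_\Lambda$ is Hausdorff.

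Reformulated, this last step says that the graph $\{(x,y)\mid x\sim y\}\subset\gm_\Lambda\times\gm_\Lambda$ is closed, which for the open relation $\sim$ is precisely what makes the quotient Hausdorff. I expect this to be the only genuinely delicate point, and even it reduces, granting Proposition \ref{lem:simapprox}, to the two elementary facts that inseparability is stable under limits and that $\pi$ is injective on the image of any local section of $\kp$.
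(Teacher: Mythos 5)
Your proof is correct and follows essentially the same route as the paper: both rest on Proposition \ref{lem:simapprox} (openness and transitivity of $\sim$) together with the fact that the inseparability relation is closed in $\gm_\Lambda\times\gm_\Lambda$. The only difference is that the paper identifies the graph of $\sim$ with the closure of the diagonal and invokes Bourbaki's criterion (an open equivalence relation has Hausdorff quotient if and only if its graph is closed), whereas you unwind that criterion into an explicit separation argument via local sections and the stability of inseparability under limits --- a harmless, self-contained substitute.
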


\begin{proof}  Consider the closure $R:=\bar\Delta$ of the diagonal
$\Delta\subset \gm_\Lambda\times \gm_\Lambda$.
Clearly, $R$ is the set of all tuples $(x,y)$ with $x\sim y$ and thus by Proposition
\ref{lem:simapprox}, iii)  the graph of an equivalence relation.

It is known that for an open equivalence relation $\sim$ on a topological space $A$ the quotient $A/_\sim$ is Hausdorff
if and only if its graph $R\subset A\times A$ is  closed  (see \cite[\S8 ${\rm N^o}$3. Prop.\ 8]{BourbTop}). 
Since $\sim$ is an open equivalence relation due to Proposition \ref{lem:simapprox}, iii) and  $R=\bar\Delta$,
this shows that indeed  $\gm_\Lambda/_\sim$ is Hausdorff.

The period map $\kp:\gm_\Lambda\to D$ is a local homeomorphism and factorizes via $\gm_\Lambda
\to \gm_\Lambda/_\sim\to D$.
Hence also $\gm_\Lambda\to \gm_\Lambda/_\sim$ is a local homeomorphism which allows one to endow
$\gm_\Lambda/_\sim$ with
the structure of a complex manifold. 

So, $\overline\gm_\Lambda:=\gm_\Lambda/_\sim$ (together with the natural maps) has the required properties.
\end{proof}

\begin{rema} The same arguments apply to any connected component $\gm_\Lambda^o$
of $\gm_\Lambda$. Thus by identifying
inseparable points, one  again obtains a Hausdorff space $\overline\gm_\Lambda^o$. Since
points in distinct connected components of $\gm_\Lambda$ can always be separated,
$\overline\gm_\Lambda^o$ is in fact a connected component of $\overline\gm_\Lambda$.
\end{rema}


\subsection{Twistor deformation and lifts of twistor lines}\label{sect:twistorRicci}

We briefly recall the construction of the twistor space. For more details see e.g.\ \cite{GHJ,Hit}.
 
Any K\"ahler class $\alpha\in H^{1,1}(X,\RR)$ on a hyperk\"ahler manifold
$X=(M,I)$ is uniquely represented by a K\"ahler form $\omega=\omega_I=g(I(~),~)$ of a hyperk\"ahler metric $g$.
The hyperk\"ahler metric $g$ comes with a sphere of complex structures $\{\lambda=
aI+bJ+cK~|~a^2+b^2+c^2=1\}$, where $K=I\circ J=-J\circ I$. Each $(M,\lambda)$ is again a complex manifold
of hyperk\"ahler type with a distinguished K\"ahler form $\omega_\lambda:=g(\lambda(~),~)$
and a holomorphic two-form $\sigma_\lambda\in H^0((M,\lambda),\Omega^2_{(M,\lambda)})$.
E.g.\ for $\lambda=J$ the latter can be explicitly given as $\sigma_J=\omega_K+i\omega_I$.
In general, the forms $\omega_\lambda$, ${\rm Re}(\sigma_\lambda)$, and ${\rm Im}(\sigma_\lambda)$ 
are contained in the three-dimensional space spanned by $\omega_I$, ${\rm Re}(\sigma_I)$,
and ${\rm Im}(\sigma_I)$.

The \emph{twistor space} associated to $\alpha$
is the complex manifold $\kx$ described by the complex structure 
$\II\in{\rm End}(T_mM\oplus T_\lambda\PP^1)$, $(v,w)\mapsto
(\lambda(v),I_{\PP^1}(w))$ on the  differentiable manifold $M\times\PP^1$.
Here, $I_{\PP^1}$ is the standard complex structure on $\PP^1$.
The integrability of $\II$ follows from  the Newlander--Nirenberg theorem, see \cite{HKR}.
In particular, the projection defines a holomorphic
map $$\kx\to\PP^1$$ whose fiber over $\lambda=I$ is just $X=(M,I)$.
If one wants to stress the dependence on the K\"ahler class $\alpha$, one also
writes $\kx(\alpha)\to T(\alpha)\cong\PP^1$.

By construction, the twistor space is a family of complex structures
on a fixed mani\-fold $M$. Thus, if we take $\Lambda=H^2(M,\ZZ)$ endowed
with the Beauville--Bogomolov pairing, then the period map yields a holomorphic
 map $\kp:\PP^1\cong T(\alpha)\to D\subset \PP(\Lambda\otimes\CC)$.
 
In fact, the period map identifies $\PP^1\cong T(\alpha)$ with the twistor line
$T_{W_\alpha}\subset D$ associated to the positive three-space
$W_\alpha:=\langle[\omega_I],[{\rm Re}(\sigma_I)],[{\rm Im}(\sigma_I)]\rangle=\RR\alpha\oplus (H^{2,0}(X)\oplus H^{0,2}(X))_\RR$, i.e.\ 
$$\kp:\PP^1\cong T(\alpha)\congpf T_{W_\alpha}\subset D.$$

 \begin{rema}
Twistor spaces are central for the theory of K3 surfaces and higher-dimensional hyperk\"ahler manifolds.
In contrast to usual deformation theory, which only provides deformations of a hyperk\"ahler manifold
$X$ over some small disk, twistor spaces are global deformations. 
\end{rema}

\section{Global and local surjectivity of the period map}

\subsection{The K\"ahler cone of a generic hyperk\"ahler manifold}

Global and local surjectivity of the period map both rely on the following
result proved in \cite{HuyInv}.

\begin{theo}[K\"ahler cone]\label{thm:KK}
Let $X$ be a compact hyperk\"ahler manifold with ${\rm Pic}(X)=0$. Then
the K\"ahler cone ${\mathcal K}_X$ of $X$ is maximal, i.e.\ ${\mathcal K}_X={\mathcal C}_X$.
\end{theo}

Here, ${\mathcal C}_X$ is the \emph{positive cone}, i.e.\  the connected component of the
open cone \mbox{$\{\alpha\in H^{1,1}(X,\RR)~|~q(\alpha)>0\}$} that contains a K\"ahler class.

\begin{rema}\label{rem:KKarb}
For arbitrary compact hyperk\"ahler manifolds the K\"ahler cone can be described 
as the open set of classes in  ${\mathcal C}_X$ that are positive on all rational curves
(see e.g.\ \cite[Prop.\ 28.5]{GHJ}), but this stronger version will not be needed.

Theorem \ref{thm:KK} relies on the projectivity criterion for compact hyperk\"ahler manifolds
that shows that $X$ is projective if and only if ${\mathcal C}_X\cap H^2(X,\ZZ)\ne0$. The original
 proof in \cite{HuyInv} was incorrect. The corrected proof given in the Erratum to \cite{HuyInv}  uses
 the Demailly--Paun description
 \cite{DemPau} of the K\"ahler cone of an arbitrary compact K\"ahler manifold (see also
 \cite{DebBourb}).
\end{rema}

\begin{coro}\label{cor:perpperiod}
If $(X,\phi)\in\gm_\Lambda$, then $\Pic(X)=0$ if and only if
$\kp(X,\phi)\not\in\bigcup_{0\ne\alpha\in\Lambda}\alpha^\perp$. In this case, 
the K\"ahler cone of $X$ is maximal, i.e.\ $\kk_X=\kc_X$.
\end{coro}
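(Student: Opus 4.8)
The plan is to deduce Corollary~\ref{cor:perpperiod} directly from Theorem~\ref{thm:KK}, the structure of the weight-two Hodge structure on $H^2(X,\ZZ)$ recalled in Section~\ref{sect:Defi}, and the description of the period domain. The one genuine point to verify is the equivalence ``$\Pic(X)=0$ iff $\kp(X,\phi)\notin\bigcup_{0\ne\alpha}\alpha^\perp$''; once this is in hand the statement about the K\"ahler cone is just Theorem~\ref{thm:KK} quoted verbatim.

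First I would unwind the definitions. Write $0:=\kp(X,\phi)=[\phi(\sigma)]\in D$, where $\sigma$ spans $H^{2,0}(X)$. By definition $\kp(X,\phi)\in\alpha^\perp$ means $q(\phi(\sigma),\alpha)=0$, i.e., after applying $\phi^{-1}$, that $q_X(\sigma,\phi^{-1}(\alpha))=0$. Now recall from Section~\ref{sect:Defi} that the Hodge decomposition $H^2(X,\CC)=(H^{2,0}\oplus H^{0,2})(X)\oplus H^{1,1}(X)$ is $q_X$-orthogonal, that $q_X(\sigma)=0$, and that $q_X(\sigma,\bar\sigma)>0$; consequently the only classes in $H^2(X,\CC)$ orthogonal to \emph{both} $\sigma$ and $\bar\sigma$ are exactly those in $H^{1,1}(X)$, and orthogonality to $\sigma$ alone already forces a real (in particular integral) class to lie in $H^{1,1}$. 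Hence for $0\ne\alpha\in\Lambda$ one has $\kp(X,\phi)\in\alpha^\perp$ if and only if $\phi^{-1}(\alpha)\in H^{1,1}(X)\cap H^2(X,\ZZ)=H^{1,1}(X,\ZZ)$, i.e.\ if and only if $\phi^{-1}(\alpha)\in\NS(X)$ up to the usual identification. Therefore $\kp(X,\phi)\in\bigcup_{0\ne\alpha\in\Lambda}\alpha^\perp$ precisely when $H^{1,1}(X,\ZZ)\ne 0$.

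The remaining step is to identify $\Pic(X)=0$ with $H^{1,1}(X,\ZZ)=0$. Since $X$ is simply connected, $H^1(X,\ko_X)=0$, so the exponential sequence gives $\Pic(X)\cong H^2(X,\ZZ)\cap H^{1,1}(X)=H^{1,1}(X,\ZZ)$ (there is no Albanese part to worry about). Thus $\Pic(X)=0$ iff $H^{1,1}(X,\ZZ)=0$ iff, by the previous paragraph, $\kp(X,\phi)\notin\bigcup_{0\ne\alpha\in\Lambda}\alpha^\perp$. Finally, when this holds we have $\Pic(X)=0$, so Theorem~\ref{thm:KK} applies and gives $\kk_X=\kc_X$, which is the last assertion. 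I do not expect a serious obstacle here; the only mild subtlety is making sure the orthogonality argument uses $q_X(\sigma,\bar\sigma)>0$ (so that $\sigma$ and $\bar\sigma$ together ``see'' all of $(H^{2,0}\oplus H^{0,2})(X)$), but for a single integral class orthogonality to $\sigma$ suffices because the class is real.
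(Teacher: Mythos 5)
Your proposal is correct and follows the same route as the paper: the paper's proof is just the one-line observation that an integral class is of type $(1,1)$ if and only if it is orthogonal to $\sigma_X$, which is exactly the content you spell out (together with the standard identification $\Pic(X)\cong H^{1,1}(X)\cap H^2(X,\ZZ)$ and the appeal to Theorem \ref{thm:KK}). Your extra care about why orthogonality to $\sigma$ alone suffices for a real class is a correct elaboration of what the paper leaves implicit.
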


\begin{proof}
The first part follows from the observation that $\phi^{-1}(\alpha)\in H^2(X,\ZZ)$ is of type $(1,1)$
if and only if it is orthogonal to the holomorphic two-form $\sigma_X$. This in turn is equivalent
to $\kp(X,\phi)\in\alpha^\perp$.
\end{proof}

\begin{prop}\label{prop:twistorlift}
Consider a marked hyperk\"ahler manifold
$(X,\phi)\in\overline\gm_\Lambda$ and assume that its period $\kp(X,\phi)$ is contained in
a  generic twistor line $T_W\subset D$. Then there exists
a unique lift of $T_W$ to a curve  in $\overline\gm_\Lambda$ through $(X,\phi)$, i.e.\  there exists a commutative diagram
$$\xymatrix{\overline\gm_\Lambda\ar[r]^-\kp& D\\
&\!\raisebox{-0.4cm}{$T_W$}\ar@{^{(}->}[u]_{i}\ar[lu]^{\tilde i}}$$
with $(X,\phi)$ in the image of $\tilde i$.
\end{prop}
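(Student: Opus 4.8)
The plan is to construct the lift by a standard twistor-deformation argument and then use the Hausdorff reduction to get uniqueness. First I would pick a representative $(X,\phi)$ and look for a K\"ahler class that produces the given twistor line. Since $T_W$ is generic, by Remark~\ref{rem:generic} all but countably many points $x\in T_W$ satisfy $x^\perp\cap\Lambda=0$; in particular $\kp(X,\phi)$ itself can be moved along $T_W$ to such a point, so after first lifting along $T_W$ locally via the Local Torelli Theorem~\ref{thm:LT} it is enough to produce the lift starting from a point with $\Pic=0$. So assume $\Pic(X)=0$. Then by Corollary~\ref{cor:perpperiod} the K\"ahler cone $\kk_X=\kc_X$ is the full positive cone. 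Write $W=P(\kp(X,\phi))\oplus\RR\alpha$ with $\alpha\in P^\perp$ positive; identifying $H^{1,1}(X,\RR)\cong \sigma_X^\perp\cap H^2(X,\RR)$, the class $\phi^{-1}(\alpha)$ lies in $\kc_X=\kk_X$ (up to sign, which we fix by orientation), hence is a K\"ahler class. The associated twistor space $\kx(\phi^{-1}(\alpha))\to\PP^1$ is a global family of complex structures on the underlying manifold $M$, and by the discussion in Section~\ref{sect:twistorRicci} its period map identifies $\PP^1$ with exactly $T_{W}$, since $W_{\phi^{-1}(\alpha)}=\RR\phi^{-1}(\alpha)\oplus(H^{2,0}\oplus H^{0,2})_\RR$ maps under $\phi$ to $W$. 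This family, together with the marking $\phi$ propagated by parallel transport, defines a holomorphic map $\PP^1\to\gm_\Lambda$ through $(X,\phi)$ lifting $i$; composing with $\gm_\Lambda\twoheadrightarrow\overline\gm_\Lambda$ gives the desired $\tilde\imath$.

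For the general case (when $\kp(X,\phi)$ need not have trivial Picard rank), I would argue as follows: by genericity of $T_W$ the subset $T_W^\circ\subset T_W$ of points with $x^\perp\cap\Lambda=0$ is the complement of a countable set, hence connected and dense. Over $T_W^\circ$ the lift exists and is unique by the argument above applied at each point (the twistor families for different base points agree where they overlap, by uniqueness of the universal deformation in Section~\ref{sec:LT}), giving a holomorphic section $\tilde\imath^\circ:T_W^\circ\to\overline\gm_\Lambda$ of the locally biholomorphic map $\overline\gm_\Lambda\to D$. To extend across the finitely-or-countably-many remaining points $x_0\in T_W\setminus T_W^\circ$: pick any point $(X_0,\phi_0)\in\overline\gm_\Lambda$ over $x_0$ (e.g.\ use that $\kp$ is locally biholomorphic, or—if one prefers—use surjectivity as in Section~\ref{sec:Globsurj}), choose a local section $s$ of $\kp$ near $x_0$ with $s(x_0)=(X_0,\phi_0)$, and observe that $s$ and $\tilde\imath^\circ$ both land in $\overline\gm_\Lambda$, agree to first order away from $x_0$ along $T_W$, hence agree on a punctured neighbourhood because $\overline\gm_\Lambda$ is Hausdorff and $\kp$ is a local homeomorphism; therefore $\tilde\imath^\circ$ extends holomorphically across $x_0$ by $s(x_0)$. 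The specified base point $(X,\phi)$ lies in the image: if $\kp(X,\phi)\in T_W^\circ$ this is the construction; if not, it is the extension value just described, which is forced by Hausdorffness to be the unique point over $\kp(X,\phi)$ in the closure of $\tilde\imath^\circ(T_W^\circ)$.

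Uniqueness of $\tilde\imath$ is then immediate: any two lifts $\tilde\imath_1,\tilde\imath_2:T_W\to\overline\gm_\Lambda$ of $i$ through $(X,\phi)$ agree at $(X,\phi)$, and since $\overline\gm_\Lambda\to D$ is a local homeomorphism and $T_W\cong\PP^1$ is connected, the set where $\tilde\imath_1=\tilde\imath_2$ is open, closed, and nonempty, hence all of $T_W$.

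The main obstacle I expect is not the twistor construction itself—which is classical—but the passage from $T_W^\circ$ to all of $T_W$: one must be careful that the extension is single-valued and lands in the right component, and it is precisely here that one genuinely uses that the reduction $\overline\gm_\Lambda$ is Hausdorff (Corollary~\ref{cor:Hausms}) rather than the original non-Hausdorff $\gm_\Lambda$, where the twistor lift over $T_W^\circ$ might a priori have several competing limits at a point of $T_W\setminus T_W^\circ$. A secondary technical point is checking that the sign of $\alpha$ (equivalently, the orientation of $W$ relative to $P(x)$) is chosen so that $\phi^{-1}(\alpha)$ lands in $\kc_X$ and not in $-\kc_X$; this is a matter of orientation bookkeeping and uses that $T_W$ is an oriented $\PP^1$ matching the orientation built into the period domain via Proposition~\ref{prop:topD}.
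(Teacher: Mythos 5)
Your core construction---reduce to a point with trivial Picard group, observe that $\kk=\kc$ there, build the twistor family for a K\"ahler class spanning $W$ together with the real and imaginary parts of the period, and note that its period map identifies $\PP^1$ with $T_W$---is exactly the paper's, and that part is fine. The genuine gap is in how you attach the lift to the \emph{given} point $(X,\phi)$ when $\Pic(X)\ne 0$, i.e.\ in your second paragraph. First, the patching over $T_W^\circ$ and the subsequent extension across the bad points are unnecessary: a single twistor family built at \emph{one} point $t$ with $\Pic(X_t)=0$ is a family over the compact $\PP^1$ and already lifts \emph{all} of $T_W$, bad points included. Second, and more seriously, the extension step as you wrote it is broken: you ``pick any point $(X_0,\phi_0)$ over $x_0$'' and a local section $s$ through it, and then assert that $s$ and $\tilde\imath^\circ$ agree near $x_0$. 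For an arbitrary choice of preimage this is false---$\kp$ restricted to $\overline\gm_\Lambda$ is not yet known to be injective, and a local section through the wrong sheet agrees with $\tilde\imath^\circ$ nowhere. Choosing the ``right'' sheet amounts to knowing that $\tilde\imath^\circ$ has a limit at $x_0$ inside $\overline\gm_\Lambda$; Hausdorffness gives \emph{uniqueness} of such a limit, not its \emph{existence} (that is a properness statement, which is precisely the hard content of the whole proof). Finally, even granting the extension, nothing in your argument forces its value at $\kp(X,\phi)$ to be $(X,\phi)$ rather than some other preimage of the same period point: distinct preimages are honestly separated in $\overline\gm_\Lambda$, so ``the unique point over $\kp(X,\phi)$ in the closure of $\tilde\imath^\circ(T_W^\circ)$'' could perfectly well be a different marked manifold.

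The repair is the route your first paragraph gestures at but does not complete. Using the Local Torelli theorem, lift a small disk $0\in\Delta\subset T_W$ through the given $(X,\phi)$ to $\tilde i:\Delta\to\overline\gm_\Lambda$, $t\mapsto(X_t,\phi_t)$, with $\tilde i(0)=(X,\phi)$; this works at \emph{any} point, no genericity needed. Since $T_W$ is generic, choose $t\in\Delta$ with $\Pic(X_t)=0$ and build the twistor family at $(X_t,\phi_t)$---crucially, at the point \emph{on the disk lift}, not at an unrelated preimage. This gives a global lift $T(\alpha_t)\to\overline\gm_\Lambda$ of all of $T_W$ through $(X_t,\phi_t)$. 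Now both $\tilde i(\Delta)$ and $T(\alpha_t)$ are lifts of (overlapping connected pieces of) $T_W$ passing through the common point $(X_t,\phi_t)$, so by uniqueness of lifts for a local homeomorphism into a Hausdorff space they coincide on $\Delta$; hence the global twistor lift contains $\tilde i(0)=(X,\phi)$. This single application of uniqueness of lifts in $\overline\gm_\Lambda$ is the step your write-up is missing, and it is the only place where Hausdorffness of the reduction is actually used in this proposition.
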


\begin{proof}
Since $\kp:\overline\gm_\Lambda\to D$ is locally biholomorphic, 
the inclusion $i:\Delta\subset T_W\hookrightarrow D$
of a small open one-dimensional disk containing 
$0=\kp(X,\phi)\in\Delta$ 
can be lifted   to $\tilde i:\Delta\hookrightarrow \overline\gm_\Lambda$, $t\mapsto(X_t,\phi_t)$
with $\tilde i(0)=(X,\phi)$. By Corollary \ref{cor:Hausms} the space $\overline\gm_\Lambda$ is Hausdorff 
and hence    $\tilde i:\Delta\hookrightarrow \overline\gm_\Lambda$
is unique. (The uniqueness is a general fact from topology which works for any local homeomorphism between
Hausdorff spaces, see e.g.\ \cite[Lem.\ 1]{Brow}.)

As $T_W$ is a generic twistor line, the set $ T_W\cap\bigcup_{0\ne\alpha\in\Lambda}\alpha^\perp$ is countable
and thus for generic $t\in\Delta$ one has  $\Pic(X_t)=0$ (see Remark \ref{rem:generic} and Corollary \ref{cor:perpperiod}).
Let us fix such a generic $t$.

By construction,  $\phi_t(\sigma_t)\in W\otimes\CC$ and, therefore, there exists a class $\alpha_t\in H^2(X_t,\RR)$
such that $\phi_t(\alpha_t)\in W$ is orthogonal to
$\phi_t\langle{\rm Re}(\sigma_t),{\rm Im}(\sigma_t)\rangle\subset W$. Hence, $\alpha_t$ is of type $(1,1)$
on $X_t$ and $\pm\alpha_t\in\kc_{X_t}$, as  $W$ is a positive three-space. 
Due to Corollary \ref{cor:perpperiod} and using  $\Pic(X_t)=0$ for our fixed generic $t$,
this implies $\pm\alpha_t\in\kk_{X_t}$.
 
Now consider the twistor space  $\kx(\alpha_t)\to T(\alpha_t)$ for $X_t$ endowed with the K\"ahler class $\pm\alpha_t$.
Since $\phi_t\langle \alpha_t,{\rm Re}(\sigma_t),{\rm Im}(\sigma_t)\rangle=W$, the period map
yields an identification $\kp:T(\alpha_t)\congpf T_W$. 

Both, $T(\alpha_t)$ and $\tilde i(\Delta)$, contain the point $t$ and map locally isomorphically
to $T_W$. Again by the uniqueness of lifts for a local homeomorphism between Hausdorff spaces,
this proves $0\in T(\alpha_t)$ which yields the assertion.
\end{proof}

\subsection{Global surjectivity}\label{sec:Globsurj}

The surjectivity of the period map proved in \cite{HuyInv} is a direct consequence of the description of the K\"ahler cone of a generic hyperk\"ahler manifold.

\begin{theo}[Surjectivity of the period map]\label{thm:surj}
Let $\gm_\Lambda^o$ be a  connected component of the moduli space $\gm_\Lambda$
of marked hyperk\"ahler manifolds. Then the restriction of the period map
$$\kp:\gm_\Lambda^o\twoheadrightarrow D\subset\PP(\Lambda\otimes\CC)$$
is surjective.
\end{theo}

\begin{proof}
Since by  Proposition \ref{prop:allequiv} any two points
$x,y\in D$ are strongly equivalent, it is enough to show that $x\in\kp(\gm_\Lambda^o)$ if and only
if $y\in \kp(\gm_\Lambda^o)$  for any two points $x,y\in T_W\subset D$
contained in a generic twistor line $T_W$. This is an immediate consequence of Proposition
\ref{prop:twistorlift} which shows that the generic twistor line
$T_W$ can be lifted through any given preimage $(X,\phi)$ of $x$.
Indeed, then $y$ will also be contained in the image of the lift of $T_W$.
\end{proof}

\subsection{Covering spaces}

This section contains a criterion that decides when a local homeomorphism is a
covering space. Recall that a continuous map $\pi:A\to D$ between Hausdorff spaces
is a \emph{covering space}
if every point in $D$ admits an open neighbourhood
$U\subset D$ such that $\pi^{-1}(U)$ is the disjoint union $\coprod U_i$ of
open subsets $U_i\subset A$ such that the projections yields    homeomorphisms $\pi:U_i\congpf U$.

The reader may want to compare Proposition \ref{prop:Eyal} below with a
classical result of Browder \cite[Thm.\ 5]{Brow}
which says that a local homeomorphism $\pi:A\to D$ between topological  Hausdorff 
manifolds is a covering space if every point in $D$ has a neighbourhood~$U$ such that
$\pi$ is a closed map on each connected component of $\pi^{-1}(U)$.

\begin{prop}\label{prop:Eyal}
A local homeomorphism $\pi:A\to D$ between topological Hausdorff manifolds
is a covering space if for any
ball $B\subset\bar B\subset D$ (see  \ref{conv}) and any connected component
$C$  of the closed subset $\pi^{-1}(\bar B)$ one has $\pi(C)=\bar B$.
\end{prop}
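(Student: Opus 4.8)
The plan is to fix a ball $B\subset\bar B\subset D$ and a point $d\in D$, and to produce an evenly covered neighbourhood of $d$. Replacing $B$ by a slightly smaller ball we may assume $d\in B$. Let $C_1,C_2,\ldots$ be the connected components of $\pi^{-1}(\bar B)$. The first step is to observe that $\pi^{-1}(\bar B)$ is closed in $A$, hence each $C_j$ is closed in $A$ and, being a connected component of a locally connected space, also open in $\pi^{-1}(\bar B)$; moreover, since $\pi$ is a local homeomorphism and $\bar B$ is a manifold with boundary, $\pi^{-1}(\bar B)$ is locally nice enough that its components are manifolds with boundary mapping to $\bar B$. By hypothesis, $\pi(C_j)=\bar B$ for every $j$.

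Next I would show that each restriction $\pi|_{C_j}\colon C_j\to\bar B$ is a homeomorphism. Surjectivity is the hypothesis; the content is injectivity. Here the key topological input is that a local homeomorphism between Hausdorff spaces which is surjective and \emph{proper} (or more precisely, a surjective local homeomorphism onto a connected, locally connected, simply connected base that is a closed map) is a homeomorphism. So first I would verify that $\pi|_{C_j}$ is a closed map: a closed subset $Z\subset C_j$ is closed in $A$, and $C_j$ being closed in $A$ forces $\pi(Z)$ to be closed in $\bar B$ because $\pi$ restricted to the closed set $\pi^{-1}(\bar B)\supset C_j$ is proper onto the (locally) compact $\bar B$ — here one uses that $\bar B$ is a closed ball, hence compact, so $\pi^{-1}(\bar B)$ need not be compact but each fibre is discrete and closed, and the closedness of $\pi|_{C_j}$ follows from the component being a union of sheets over $\bar B$. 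Then a surjective, closed, local homeomorphism onto the simply connected, connected, locally path-connected space $\bar B$ is a covering map; since $\bar B$ is simply connected and $C_j$ is connected, this covering has one sheet, i.e.\ $\pi|_{C_j}$ is a homeomorphism.

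It then follows immediately that, over the open ball $B$, the sets $U_j:=C_j\cap\pi^{-1}(B)=(\pi|_{C_j})^{-1}(B)$ are open in $A$, pairwise disjoint (they lie in distinct components of $\pi^{-1}(\bar B)$), their union is $\pi^{-1}(B)$, and each maps homeomorphically onto $B$. Thus $B$ is an evenly covered open neighbourhood of $d$, and since $d\in D$ was arbitrary, $\pi\colon A\to D$ is a covering space.

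\textbf{Main obstacle.} The delicate point is the closedness of $\pi|_{C}$ for a component $C$ of $\pi^{-1}(\bar B)$, and consequently that $\pi|_C$ is a genuine covering of $\bar B$ rather than merely a surjective local homeomorphism. The hypothesis $\pi(C)=\bar B$ is exactly what rules out the pathology where a sheet ``spirals'' and only limits onto part of the boundary; turning this into properness/closedness requires care with the behaviour near $\partial B$, using that $C$ is a \emph{closed} subset of $A$ (because $\pi^{-1}(\bar B)$ is closed and $C$ is a connected component of it) together with the fact that $\bar B$ is compact, so that any sequence in $C$ whose images converge in $\bar B$ subconverges in $C$. Once closedness is in hand, the reduction to a one-sheeted covering via $\pi_1(\bar B)=1$ is routine.
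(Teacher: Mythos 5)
Your overall strategy --- reduce to showing that each connected component $C$ of $\pi^{-1}(\bar B)$ maps to $\bar B$ by a closed (equivalently, proper) map, and then use simple connectivity of $\bar B$ to conclude that $\pi|_C$ is a homeomorphism --- is reasonable, and is essentially Browder's criterion, which the paper quotes for comparison just before the proposition. But the step where you establish closedness of $\pi|_{C}$ is exactly the content of the proposition, and your justification for it is circular. You write that ``the closedness of $\pi|_{C_j}$ follows from the component being a union of sheets over $\bar B$'' --- yet that $C_j$ is a union of full sheets is precisely what has to be proved --- and that ``any sequence in $C$ whose images converge in $\bar B$ subconverges in $C$'' because $C$ is closed in $A$. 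The latter is false in general: $C$ is closed in $A$, but $A$ is not compact, so a sequence in $C$ with convergent image can perfectly well leave every compact subset of $A$ (think of the inclusion of an open half-plane into $\RR^2$, or of the universal cover of a punctured disk, where partial sheets ``end'' over interior points). Having compact image does not make $\pi|_C$ proper, and the hypothesis $\pi(C)=\bar B$ for the single ball $\bar B$ does not by itself exclude such non-proper behaviour of $C$ over smaller balls.

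This points to the second, structural, problem: you only ever invoke the hypothesis for one ball around each point of $D$, whereas the paper's proof needs it for a whole family of balls. There, one fixes $x\in A$, considers concentric closed balls $\bar B_\varepsilon$ around $\pi(x)$ and the set $I$ of radii over which a section through $x$ exists, and shows $I$ is open and closed; the hypothesis is applied at the critical radius $\delta=\sup I$ to the component of $\pi^{-1}(\bar B_\delta)$ containing the closure of the section over the open ball $B_\delta$, which is what allows the section to extend to $\bar B_\delta$. Your argument provides no mechanism ruling out a connected component that surjects onto the fixed $\bar B$ while containing a non-proper partial sheet over some interior sub-ball (which would violate the hypothesis for that sub-ball, but you never use that). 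To repair the proof you would either have to prove properness of $\pi|_C$ directly from the single-ball hypothesis --- which I do not see how to do --- or run the radial section-extension argument over all concentric sub-balls as in the paper.
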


\begin{proof}
We shall follow the alternative arguments of Markman given in the appendix to \cite{Verb}.
The techniques are again elementary, but need to be applied with care.

The proof can be immediately reduced to the case that $D=\RR^n$ and $A$ is connected.
Then $\pi:A\to D=\RR^n$ is a covering space, i.e.\ $\pi:A\congpf \RR^n$, if and only
if $\pi$ admits a section $\gamma:\RR^n\to A$. 

Pick a point $x\in A$ with $\pi(x)=0\in \RR^n$ (the origin) and
consider balls $B_\varepsilon\subset\bar B_\varepsilon\subset \RR^n$ of radius $\varepsilon$ centered in
$0\in\RR^n$. 
By the lifting property of local homeomorphisms (see e.g.\ \cite[Lem.\ 1]{Brow}) any
section $\gamma:\bar B_\varepsilon\to A$ is uniquely determined by $\gamma(0)$.

For small $0\leq\varepsilon$ there exists a section of $\pi$
over the closed ball $\gamma:\bar B_\varepsilon\to A$ 
 with $\gamma(0)=x$ (use that $\pi$ is a local homeomorphism in $x$).
 Let $I\subset \RR$ be the set of all $0\leq\varepsilon$ for which such a section exists.
 Then $I$ is a connected interval in $\RR_{\geq 0}$ containing $0$. It suffices to show that
 $I$ is open and closed, which would imply $I=[0,\infty)$ and thus prove the existence
 of a section $\gamma:\RR^n\to A$ of $\pi$.
 
 \smallskip
 
 {\it Claim: $I$ is open.}
 Consider $\varepsilon\in I$ and the corresponding section $\gamma:\bar B_\varepsilon\to A$ with
 $\gamma(0)=x$. Then choose for each point $t\in \bar B_\varepsilon\setminus B_\varepsilon$
 a small ball $B_{\varepsilon_t}(t)$ of radius $\varepsilon_t$ centered in $t$ over which $\pi$ admits a section $\gamma_t:B_{\varepsilon_t}(t)\to
 A$ with $\gamma_t(t)=\gamma(t)$. Note that then $\gamma=\gamma_t$ on the intersection
 $\bar B_\varepsilon\cap B_{\varepsilon_t}(t)$.
 
 Since $\bar B_\varepsilon\setminus B_\varepsilon$ is compact,
 there exist finitely many points $t_1,\ldots,t_k\in \bar B_\varepsilon\setminus B_\varepsilon$
 such that $\bar B_\varepsilon\setminus B_\varepsilon\subset\bigcup B_{\varepsilon_{t_i}}(t_i)$.
 Moreover, there also exists $\varepsilon<\delta$ such that $\bar B_\delta\subset B_\varepsilon\cup
 \bigcup B_{\varepsilon_{t_i}}(t_i)$. Then, $\gamma$ and the $\gamma_{t_i}$ glue to
 a section $\gamma:\bar B_\delta\to A$. Indeed, $\gamma$ and $\gamma_{t_i}$ coincide on 
 $B_\varepsilon\cap B_{\varepsilon_{t_i}}(t_i)$. In order to show that
  $\gamma_{t_i}$ and $\gamma_{t_j}$ glue over the intersection   $B_{\varepsilon_{t_i}}\cap B_{\varepsilon_{t_j}}$
  (if not empty), one uses that  this (connected) intersection 
also meets  $B_\varepsilon$ on which $\gamma_{t_i}$ and $\gamma_{t_j}$ both coincide with $\gamma$ and hence with each other. (Draw a picture!)    Hence $\delta\in I$ and thus $[0,\delta)\subset I$ is an open
 subset of $I$ containing $\varepsilon$.
 
 \smallskip
 
 {\it Claim: $I$ is closed.} In this step one uses the assumption $\pi(C)=\bar B$. Consider  $\delta \in \RR_{\geq0}$
 in the closure of $I$. Then for all $\varepsilon<\delta$
 there is a section $\gamma:\bar B_\varepsilon\to A$ with $\gamma(0)=x$. Therefore,
 there  exists a section over the open ball  $\gamma:B_\delta\to A$. 

Let $C_0$ be the closure  $\overline{\gamma(B_\delta)}\subset A$ and let us show that 
then $\pi:C_0\congpf\pi(C_0)$
and that $C_0$ coincides with the connected component $C$ of $\pi^{-1}(\bar B_\delta)$
containing $x$.

To do this, choose balls $B_{\varepsilon_t}(t)$ as in the previous step for
all points $t\in \bar B_\delta\setminus B_\delta$ that are also contained in $\pi(C_0)$.
We cannot apply a compactness argument, because a priori not every $t$ in the boundary of $B_\delta$
might be in the image of $C_0$. Nevertheless, the sections $\gamma_t$ and $\gamma$ glue to a
section $B_{\delta}\cup\bigcup_{t\in\pi(C_0)} B_{\varepsilon_t}(t)\to A$ and we denote the
image of this section
by $V\subset A$.

 Thus, $V$ is an open subset of $A$ homeomorphic to its image under $\pi$ (which 
is $B_{\delta}\cup\bigcup_{t\in\pi(C_0)} B_{\varepsilon_t}(t)$). But then $C_0=V\cap\pi^{-1}(\bar B_\delta)$
which in particular shows that $C_0$ is open in $\pi^{-1}(\bar B_\delta)$. By definition, $C_0$ is also closed and 
certainly contained in $C$. Hence $C_0$ coincides with the connected component $C$ and as $C_0\subset V\cong\pi(V)$, also $C_0\cong\pi(C_0)$.

Since by assumption $\pi(C)=\bar B_\delta$ and, as just proved, $C=C_0$, one finds that a section over $\bar B_\delta$ exists.
This  yields $\delta\in I$. Hence, $I$ is closed. 
\end{proof}

\subsection{Local surjectivity and proof of Verbitsky's theorem}\label{sec:Locsurj}

In this section we conclude the proof of Verbitsky's Theorem \ref{theo:Verb}.
The first step is a verification of the assumption of Proposition \ref{prop:Eyal},
which can be seen as a local version of the surjectivity of the period map (see Theorem
\ref{thm:surj}).

\begin{prop}
Consider the period map $\kp:\overline\gm_\Lambda^o\to D$ from a connected
component $\overline\gm_\Lambda^o$ of $\overline\gm_\Lambda$. If $B\subset\bar B\subset D$ is a ball (see \ref{conv}),
then for any connected component $C$ of $\kp^{-1}(\bar B)$ one has $\kp(C)=\bar B$.
\end{prop}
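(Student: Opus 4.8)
The plan is to show that the claimed property propagates along generic twistor lines, using the lift of Proposition \ref{prop:twistorlift}, and then to invoke the local connectivity Proposition \ref{prop:localstrong} to fill up all of $\bar B$. First I would fix a ball $B\subset\bar B\subset D$ and a connected component $C$ of $\kp^{-1}(\bar B)$, and pick a point $(X,\phi)\in C$ with $x_0:=\kp(X,\phi)\in B$. The set $\kp(C)\cap B$ is clearly open in $B$ (since $\kp$ is locally biholomorphic and $C$ is a component of the preimage of the closed ball $\bar B$, a small ball around any point of $C$ over $B$ stays inside $C$). The goal is to prove $\kp(C)\supseteq B$; the passage from $B$ to $\bar B$ will then follow by a separate limiting argument using Lemma \ref{lem:mucheasier}.

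The heart of the argument is: \emph{if $x\in\kp(C)\cap B$ and $T_W$ is a generic twistor line through $x$, then the connected component of $T_W\cap B$ containing $x$ is entirely contained in $\kp(C)$.} To see this, let $(X,\phi)\in C$ with $\kp(X,\phi)=x$. By Proposition \ref{prop:twistorlift} there is a unique lift $\tilde i\colon T_W\to\overline\gm_\Lambda^o$ of $T_W$ through $(X,\phi)$. Restricting $\tilde i$ to the connected component $T_0$ of $T_W\cap B$ containing $x$, we obtain a connected subset $\tilde i(T_0)\subset\kp^{-1}(\bar B)$ meeting $C$ (at $(X,\phi)$); hence $\tilde i(T_0)\subset C$, and therefore $T_0=\kp(\tilde i(T_0))\subset\kp(C)$. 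Combined with Proposition \ref{prop:localstrong} — which says any two points of $B$ are joined by a chain of generic twistor lines staying within single connected components of their intersections with $B$ — this shows that $\kp(C)\cap B$ is also closed in $B$: indeed, starting from $x_0\in\kp(C)\cap B$ and walking along such a chain to an arbitrary $y\in B$, each successive point lies in $\kp(C)$. By connectedness of $B$, $\kp(C)\cap B=B$, so $B\subseteq\kp(C)$.

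It remains to upgrade $B\subseteq\kp(C)$ to $\bar B=\kp(C)$. The inclusion $\kp(C)\subseteq\bar B$ is automatic since $C\subseteq\kp^{-1}(\bar B)$, so I only need every $x\in\partial B=\bar B\setminus B$ to lie in $\kp(C)$. By Lemma \ref{lem:mucheasier} there is a generic twistor line $T_W$ with $x\in\partial B\cap T_W$ lying in the closure of $B\cap T_W$; pick $x'\in B\cap T_W$ close to $x$ in the same local branch. Then $x'\in\kp(C)$, so some $(X',\phi')\in C$ has period $x'$, and by Proposition \ref{prop:twistorlift} the generic twistor line $T_W$ lifts through $(X',\phi')$; the lift is a connected set inside $\kp^{-1}(\bar B)$ meeting $C$, hence contained in $C$, and its image contains a neighbourhood of $x'$ in $T_W$, in particular reaching up to $x$ (taking $x'$ on the segment of $T_W\cap B$ abutting $x$, the lifted arc extends continuously to a preimage of $x$). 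Thus $x\in\kp(C)$, completing the proof.

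\textbf{Main obstacle.} The delicate point is the closedness of $\kp(C)\cap B$ in $B$, i.e.\ making rigorous that "walking along a chain of generic twistor lines" stays inside $\kp(C)$: this requires precisely the \emph{local} version Proposition \ref{prop:localstrong} (not the global Proposition \ref{prop:allequiv}), because the twistor lifts must remain over $\bar B$ to stay in the fixed component $C$, and one must ensure each point visited is joined to the previous one within a single connected component of $T_{W_i}\cap B$ so that the unique lift is available and stays in $C$. The boundary step via Lemma \ref{lem:mucheasier} is the second subtlety — one needs the generic twistor line through the boundary point to genuinely intersect the interior $B$ in the correct local branch, which is exactly what that lemma guarantees.
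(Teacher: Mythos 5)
Your proof is correct and follows essentially the same route as the paper: show $B\subseteq\kp(C)$ by propagating membership in $\kp(C)$ along the chains of generic twistor lines provided by Proposition \ref{prop:localstrong}, using the unique lifts of Proposition \ref{prop:twistorlift} and the fact that a connected lift over a component of $T_W\cap B$ meeting $C$ must lie in $C$, and then reach the boundary $\bar B\setminus B$ via Lemma \ref{lem:mucheasier}. The open/closed packaging of the interior step is a cosmetic variation on the paper's argument, and the identified subtleties (staying in a single component of $T_{W_i}\cap B$, and the boundary branch) are exactly the ones the paper addresses.
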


\begin{proof} We first adapt the arguments of  Theorem \ref{thm:surj} to show $B\subset \kp(C)$.

Clearly, $\kp(C)$ contains at least one point of $B$, because $\kp$ is a local homeomorphism.
Due to Proposition \ref{prop:localstrong}, any two points $x,y\in B$ are strongly equivalent as points in~$B$.
Thus it suffices to show that $x\in\kp(C)$ if  and only
if $y\in\kp(C)$ for any two points $x,y\in B$ contained in the same connected component of the intersection
$T_W\cap B$ with $T_W$ a generic twistor line. If $x=\kp(X,\phi)$ with $(X,\phi)\in C$, choose a local 
lift of the inclusion $x\in \Delta\subset T_W$ and then argue literally as in the proof of Theorem  \ref{thm:surj}.
The assumption that $x,y$ are contained in the same connected component of $T_W\cap B$ ensures
that the twistor deformation $T(\alpha_t)$ constructed there connects $(X,\phi)$ to a point over $y$ that is indeed again
contained in $C$.

It remains to prove that also the boundary $\bar B\setminus B$ is contained in $\kp(C)$. For this apply
Lemma \ref{lem:mucheasier} to any point $x\in \bar B\setminus B$ and lift the generic twistor line connecting~$x$ 
with a  point in $B$ to a twistor deformation as before.
\end{proof}

Then Proposition \ref{prop:Eyal} immediately yields:

\begin{coro} If $\overline\gm_\Lambda^o$ is a connected component of $\overline\gm_\Lambda$, then 
$\kp:\overline\gm_\Lambda^o\to D$ is a covering space.\qqed\end{coro}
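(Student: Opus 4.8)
The final statement to prove is that if $\overline\gm_\Lambda^o$ is a connected component of $\overline\gm_\Lambda$, then $\kp\colon\overline\gm_\Lambda^o\to D$ is a covering space. The plan is to verify the hypotheses of Proposition \ref{prop:Eyal} and then simply invoke it. We already know from Corollary \ref{cor:Hausms} that $\overline\gm_\Lambda$ is a complex Hausdorff manifold and that the period map $\kp$ is locally biholomorphic, in particular a local homeomorphism between topological Hausdorff manifolds; and $D$ is a complex (hence topological) manifold by Proposition \ref{prop:topD}. So $\kp\colon\overline\gm_\Lambda^o\to D$ satisfies the standing assumptions of Proposition \ref{prop:Eyal}. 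What remains is exactly the assumption on preimages of closed balls: for every ball $B\subset\bar B\subset D$ (in the sense of Convention \ref{conv}) and every connected component $C$ of the closed set $\kp^{-1}(\bar B)$, one has $\kp(C)=\bar B$. Once this is in hand, Proposition \ref{prop:Eyal} gives the conclusion verbatim.

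First I would reduce to showing $B\subset\kp(C)$, following the pattern of the global surjectivity argument (Theorem \ref{thm:surj}). Since $\kp$ is a local homeomorphism, $\kp(C)$ is nonempty and meets $B$; pick $x\in B\cap\kp(C)$. By Proposition \ref{prop:localstrong}, any two points of $B$ are \emph{strongly} equivalent as points in $B$, i.e.\ joined by a chain of generic twistor lines whose successive links lie in a single connected component of $T_{W_i}\cap B$. So it suffices to propagate membership in $\kp(C)$ along one such link: if $x=\kp(X,\phi)$ with $(X,\phi)\in C$ and $y$ lies in the same connected component of $T_W\cap B$ as $x$ for a generic twistor line $T_W$, then $y\in\kp(C)$. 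Here I would lift a small disk $\Delta\subset T_W$ through $(X,\phi)$ into $\overline\gm_\Lambda$ using that $\kp$ is locally biholomorphic, then run the argument of Proposition \ref{prop:twistorlift}: at a generic $t\in\Delta$ one has $\Pic(X_t)=0$ (Remark \ref{rem:generic}, Corollary \ref{cor:perpperiod}), so the relevant class $\alpha_t$ is K\"ahler, and the twistor deformation $\kx(\alpha_t)\to T(\alpha_t)\congpf T_W$ gives a global lift of all of $T_W$ (or rather of the connected component of $T_W\cap B$ containing $x$ and $y$) staying inside $\overline\gm_\Lambda$; by uniqueness of lifts in the Hausdorff setting this lift agrees with the one through $(X,\phi)$, hence stays in $C$, so a preimage of $y$ lies in $C$. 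Iterating along the chain yields $B\subset\kp(C)$.

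Next, to handle the boundary $\bar B\setminus B\subset\kp(C)$, I would invoke Lemma \ref{lem:mucheasier}: for each $x\in\bar B\setminus B$ there is a generic twistor line $T_W$ with $x\in\partial B\cap T_W$ lying in the boundary of $B\cap T_W$, so $x$ can be connected to an interior point of $B$ by that generic twistor line. Lifting as above — starting now from a preimage in $C$ of the interior point, which exists by the previous paragraph — produces a preimage of $x$ in $C$. Hence $\bar B=B\cup(\bar B\setminus B)\subset\kp(C)$, and since $C\subset\kp^{-1}(\bar B)$ the reverse inclusion $\kp(C)\subset\bar B$ is automatic, so $\kp(C)=\bar B$. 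This is precisely the hypothesis of Proposition \ref{prop:Eyal}, which therefore applies and shows $\kp\colon\overline\gm_\Lambda^o\to D$ is a covering space.

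\textbf{Main obstacle.} The routine part is assembling the already-available pieces; the delicate point is the twistor-lift propagation argument in the \emph{local} setting. One must be careful that the twistor deformation $T(\alpha_t)$, which a priori is a global family over all of $\PP^1\cong T_W$, is used only to lift the \emph{connected component of $T_W\cap B$} containing $x$ and $y$ — the hypothesis of Proposition \ref{prop:localstrong} that successive link-endpoints lie in one component of $T_{W_i}\cap B$ is exactly what makes the lifted path stay over $B$ and hence inside the chosen component $C$ of $\kp^{-1}(\bar B)$. Conflating $T_W$ with $T_W\cap B$ here, or forgetting that $C$ need not surject onto all of $T_W$, would break the argument; keeping the bookkeeping of connected components straight is the crux.
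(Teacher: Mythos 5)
Your proposal is correct and follows essentially the same route as the paper: the paper likewise establishes the hypothesis of Proposition \ref{prop:Eyal} via a separate local-surjectivity proposition, proving $B\subset\kp(C)$ by combining Proposition \ref{prop:localstrong} with the twistor-lift argument of Proposition \ref{prop:twistorlift}, and then covering the boundary $\bar B\setminus B$ using Lemma \ref{lem:mucheasier}. Your emphasis on tracking connected components of $T_W\cap B$ so that the lift stays in $C$ is exactly the point the paper also singles out.
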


Since $D$ is simply connected (see Proposition \ref{prop:topD}), this can equivalently be expressed as

\begin{coro}\label{cor:periodiso}
 If $\overline\gm_\Lambda^o$ is a connected component of $\overline\gm_\Lambda$, then 
$\kp:\overline\gm_\Lambda^o\to D$ is an isomorphism.\qqed
\end{coro}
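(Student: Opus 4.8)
The plan is to derive Corollary \ref{cor:periodiso} as an immediate formal consequence of the preceding Corollary (that $\kp:\overline\gm_\Lambda^o\to D$ is a covering space) together with the simple connectedness of $D$ established in Proposition \ref{prop:topD}. The key point is that a connected covering space of a simply connected, locally path-connected space is necessarily trivial, i.e.\ a homeomorphism; since $\kp$ is moreover locally biholomorphic (Corollary \ref{cor:Hausms}), a homeomorphism here is automatically an isomorphism of complex manifolds.

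Concretely, I would proceed as follows. First, recall that $\overline\gm_\Lambda^o$ is connected by definition of a connected component, and that $D$ is a complex manifold, hence locally path-connected and (being connected, by Proposition \ref{prop:topD}) path-connected. Second, invoke the classification of covering spaces: a covering map $\kp:\overline\gm_\Lambda^o\to D$ with $D$ simply connected and $\overline\gm_\Lambda^o$ connected must have trivial (one-element) fibers, so $\kp$ is a bijective local homeomorphism, hence a homeomorphism. Alternatively, and perhaps more self-containedly, one can argue directly: by Proposition \ref{prop:topD} there is a section $s:D\to\overline\gm_\Lambda^o$ (lift the identity $\id_D$ through the covering $\kp$, using $\pi_1(D)=\{1\}$ and the path-lifting/homotopy-lifting properties of covering maps, picking any point in the fiber over a base point as the value of $s$ there); then $s(D)$ is open and closed in $\overline\gm_\Lambda^o$ — open because $\kp$ is a local homeomorphism and $s$ is continuous, closed because $\overline\gm_\Lambda^o$ is Hausdorff and $s\circ\kp$ is continuous — so by connectedness $s(D)=\overline\gm_\Lambda^o$, whence $s$ is a homeomorphism inverse to $\kp$. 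Third, upgrade this homeomorphism to a biholomorphism: since $\kp$ is locally biholomorphic by Corollary \ref{cor:Hausms} and bijective, its inverse $s$ is holomorphic as well, so $\kp$ is an isomorphism of complex manifolds.

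There is essentially no main obstacle here — this is a one-paragraph formal deduction — so the only thing to be slightly careful about is the logical dependency: we are using that $\overline\gm_\Lambda^o$ is Hausdorff (Corollary \ref{cor:Hausms}), that $\kp$ restricted to it is a local homeomorphism and even locally biholomorphic, that it is a covering map (the previous corollary), and that $D$ is connected and simply connected (Proposition \ref{prop:topD}). All of these are available. If one wants to avoid even citing the general covering-space theory, the direct section-and-clopenness argument sketched above is fully elementary and suffices.

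\begin{proof}
By the previous corollary, $\kp:\overline\gm_\Lambda^o\to D$ is a covering space, and $\overline\gm_\Lambda^o$ is connected. Since $D$ is simply connected by Proposition \ref{prop:topD}, the covering $\kp$ is trivial: choosing a point $y_0\in D$ and any $x_0\in\kp^{-1}(y_0)$, the unique lift of the identity $\id_D:D\to D$ with $x_0$ in its image defines a continuous section $s:D\to\overline\gm_\Lambda^o$ of $\kp$. The image $s(D)$ is open in $\overline\gm_\Lambda^o$, because $\kp$ is a local homeomorphism and $s$ is continuous, and it is closed, because $\overline\gm_\Lambda^o$ is Hausdorff (Corollary \ref{cor:Hausms}) and $s(D)$ is the locus where the two continuous maps $\id$ and $s\circ\kp$ agree. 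As $\overline\gm_\Lambda^o$ is connected and $s(D)\ne\emptyset$, we get $s(D)=\overline\gm_\Lambda^o$, so $s$ and $\kp$ are mutually inverse homeomorphisms. Finally, $\kp$ is locally biholomorphic by Corollary \ref{cor:Hausms}, hence so is its inverse $s$; therefore $\kp$ is an isomorphism of complex manifolds.
\end{proof}
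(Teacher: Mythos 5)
Your argument is correct and is exactly the deduction the paper intends: the corollary is stated there as an immediate consequence of the covering-space property together with the simple connectedness of $D$ from Proposition \ref{prop:topD}, with no further proof given. Your section-and-clopenness argument simply fills in the standard details of that one-line deduction.
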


The proof of Theorem \ref{theo:Verb} can now be completed as follows:

\smallskip

Consider a connected component $\gm^o_\Lambda$ of $\gm_\Lambda$. Then $\gm_\Lambda^o$ gives rise
to a connected component $\overline\gm_\Lambda^o$ of $\overline\gm_\Lambda$. By Corollary
\ref{cor:periodiso} the period map $\kp:\overline\gm_\Lambda^o\congpf D$ is an isomorphism and in particular
all its fibers  consist of exactly one point.

Thus it suffices to show that the generic fiber of the natural quotient
$$\pi:\gm_\Lambda^o\to\overline\gm_\Lambda^o$$ consists of just one point (see Remark \ref{rem:gen} for
the meaning of generic).
The fibers of $\pi$ are the equivalence classes of the equi\-valence relation $\sim$
or, equivalently, $\approx$ (see Corollary \ref{cor:Hausms}). By Proposition \ref{prop:nonsepbirat},
points with periods in the complement of $D\cap\bigcup_{0\ne\alpha\in\Lambda}\alpha^\perp$ can be separated from
any other point. 
Thus, the fibers of $\gm_\Lambda^o\to D$ over
all points in the complement of $D\cap\bigcup_{0\ne\alpha\in\Lambda}\alpha^\perp$
consists of just one point.\qqed

\section{Further remarks}\label{sec:GT}

This concluding section explains some consequences of Verbitsky's Global Torelli theorem. Unfortunately,
due to time and space restrictions, I cannot enter a discussion of the polarized case which for an algebraic
geometer is of course the most interesting one. For the latter and in particular for results on
the number of components of moduli spaces of polarized varieties of fixed degree we refer to the relevant sections in
\cite{GHS} and \cite{Mark2}.

\subsection{Bimeromorphic Global Torelli theorem}\label{sec:GT1}

For clarity sake, let us briefly explain again why the generic injectivity of the period
map $\kp:\gm^o_\Lambda \to D\subset\PP(\Lambda \otimes\CC)$ implies that
points in arbitrary fibres are at least birational and how this compares to the Weyl group action for K3 surfaces.
This was alluded to in Remark \ref{rem:passagebim} and corresponds to Proposition \ref{prop:nonsepbirat}.

To illustrate this, let us first go back to the case of K3 surfaces. The moduli space of marked K3 surfaces $\gm$
consists of two connected components interchanged by the involution $(S,\phi)\mapsto (S,-\phi)$.
In particular, for a generic point $x\in D$ the fibre $\kp^{-1}(x)$ consists of exactly two points.
Recall, that the set of generic points $x\in D$ is the complement of a countable union of hyperplane
sections.

The fibre of $\kp:\gm\to D$ over an arbitrary period point $x\in D$ admits a transitive action of
the group of all $\varphi\in {\rm O}(\Lambda)$ fixing $x$. This group is isomorphic to the group
${\rm O}_{\rm Hdg}(H^2(S,\ZZ))$ of all Hodge isometries of $H^2(S,\ZZ)$ of any marked K3 surface $(S,\phi)\in\kp^{-1}(x)$.
The group ${\rm O}_{\rm Hdg}(H^2(S,\ZZ))$ contains the Weyl group $W_S$ generated by all reflections
$s_\delta$ associated to $(-2)$-classes $\delta\in\Pic(S)\cong H^{1,1}(S)\cap H^2(S,\ZZ)$. 
Using the fact that the K\"ahler cone $\kk_S$ of a K3 surface $S$ is cut out from the positive 
cone $\kc_S$ by the hyperplanes $\delta^\perp$, one finds that $\kp^{-1}(x)$ for an arbitrary $x\in D$ 
admits a simply transitive  action of $W_S\times\{\pm1\}$ (see e.g.\ \cite[Exp.\ VII]{BeauvAst}).
Again, $(S,\phi)$ is an arbitrary point in $\kp^{-1}(x)$.

 In particular, the K3 surfaces $S$ and $S'$ underlying two points $(S,\phi),(S',\phi')$  in the same
fibre of $\kp$ are abstractly isomorphic. However, the natural correspondence relating $S$ and $S'$ is not
the graph $\Gamma_g$ of any isomorphism $g:S\cong S'$ but a cycle of the form
$\Gamma:=\Gamma_g+\sum C_k\times C'_k$, where $C_k\subset S$ and $C_k'\subset S'$ are smooth rational curves.
Indeed, if $(S,\phi),(S',\phi')$ are considered as limits of sequences of generic $(S_i,\phi_i)$, resp.\
$(S'_i,\phi'_i)$, with $\kp(S_i,\phi_i)=\kp(S'_i,\phi'_i)$, then the graphs $\Gamma_{g_i}$ of the isomorphisms
$g_i:S_i\congpf S'_i$ deduced from the generic injectivity of $\kp$ (up to sign) will in general not specialize to the
graph of an isomorphism but to a cycle of the form $\Gamma$.

In higher dimensions the situation is similar. Consider two marked compact hyper\-k\"ahler manifolds
$(X,\phi),(X',\phi')$ which are contained in the same connected component $\gm_\Lambda^o$.
Suppose that their periods coincide $\kp(X,\phi)=\kp(X',\phi')$. If the period is generic in $D$,
then Theorem \ref{theo:Verb} proves that $(X,\phi)=(X',\phi')$ as points in $\gm_\Lambda^o$ and thus $X\cong X'$. However, if the period  is not generic, then $X$ and $X'$ might be non-isomorphic. But in this case, they can at least be viewed
as specializations of two sequences $(X_i,\phi_i)$, resp.\ $(X'_i,\phi_i')$, with generic periods
$\kp(X_i,\phi_i)=\kp(X'_i,\phi_i')$ as above. Applying Theorem \ref{theo:Verb} to $(X_i,\phi_i),(X_i',\phi_i')$, shows
the existence of isomorphisms $g_i:X_i\congpf X'_i$ inducing $(X_i,\phi_i)=(X_i',\phi_i')$ as points in $\gm_\Lambda^o$.

Again the graphs $\Gamma_{g_i}$ of the isomorphisms $g_i$ will converge to a cycle $\Gamma\subset X\times X'$,
but $\Gamma$ is more difficult to control. In any case, one can show that $\Gamma$ splits
into $\Gamma=Z+\sum Y_k$ where $X\leftarrow Z\to X'$ defines a bimeromorphic map and none
of the projections $Y_k\to X$ and $Y_k'\to X'$ are dominant (see the proof of Proposition \ref{prop:nonsepbirat}).
This is Theorem 4.3 in \cite{HuyInv} which expresses the fact by saying that non-separated points in $\gm_\Lambda$ are bimeromorphic.

As a consequence of Theorem \ref{theo:Verb}, one can thus state the following

\begin{coro}
Let $(X,\phi),(X',\phi)$ be marked hyperk\"ahler manifolds contained in the same connected component $\gm_\Lambda^o$.
If  $\kp(X,\phi)=\kp(X',\phi')$, then $X$ and $X'$ are bimeromorphic.\qqed
\end{coro}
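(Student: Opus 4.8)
The plan is to deduce this corollary directly from Theorem \ref{theo:Verb} together with Proposition \ref{prop:nonsepbirat} and the density arguments already developed in the excerpt. First I would reduce to the case where the common period $x:=\kp(X,\phi)=\kp(X',\phi')$ lies on the exceptional locus $D\cap\bigcup_{0\ne\alpha\in\Lambda}\alpha^\perp$; if it does not, then by Theorem \ref{theo:Verb} the fibre $\kp^{-1}(x)\cap\gm_\Lambda^o$ is a single point, so $(X,\phi)=(X',\phi')$ and in particular $X\cong X'$, hence trivially bimeromorphic. So assume $x$ is a non-generic period.

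The key step is a deformation-and-specialization argument. Since $\kp:\overline\gm_\Lambda^o\congpf D$ is an isomorphism (Corollary \ref{cor:periodiso}), the points $(X,\phi)$ and $(X',\phi')$ map to the \emph{same} point of $\overline\gm_\Lambda^o$, i.e.\ they are inseparable in $\gm_\Lambda^o$: $(X,\phi)\sim(X',\phi')$. By the definition of $\sim$ there is a sequence $(X_i,\phi_i)\in\gm_\Lambda^o$ converging simultaneously to $(X,\phi)$ and $(X',\phi')$. One may moreover arrange the periods $\kp(X_i,\phi_i)$ to avoid $\bigcup_\alpha\alpha^\perp$: indeed the set of such generic periods is dense in $D$ by Remark \ref{rem:perioddense}, and using the local sections $s_{(X,\phi)}$ and $s_{(X',\phi')}$ from Definition \ref{def:approx} (which agree on a dense open set $U_0$) one picks $x_i\in U_0$ tending to $x$, and sets $(X_i,\phi_i):=s_{(X,\phi)}(x_i)=s_{(X',\phi')}(x_i)$. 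For such $x_i$ the fibre is a single point by Theorem \ref{theo:Verb}, so no extra choice is needed. Then the graphs $\Gamma_{g_i}\subset X_i\times X_i'$ of the (trivial) identifications converge, after passing to a subsequence, to a cycle $\Gamma\subset X\times X'$ by the compactness of the relevant Chow/Barlet space; write $\Gamma=Z+\sum Y_k$ as in the proof of Proposition \ref{prop:nonsepbirat}. The component $Z$ dominates both factors and defines a bimeromorphic correspondence $X\dashrightarrow X'$; this is exactly the content of \cite[Thm.\ 4.3]{HuyInv} already invoked there.

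Alternatively, and more cleanly, I would simply quote Proposition \ref{prop:nonsepbirat} itself: once we know $(X,\phi)\sim(X',\phi')$ with $(X,\phi)\ne(X',\phi')$ in $\gm_\Lambda^o$, that proposition states verbatim that $X$ and $X'$ are bimeromorphic (and that $\kp(X,\phi)\in D\cap\alpha^\perp$ for some $0\ne\alpha$, consistent with our reduction). So the whole argument collapses to: (i) the generic case is handled by Theorem \ref{theo:Verb}; (ii) the non-generic case gives inseparable points, which by Proposition \ref{prop:nonsepbirat} are bimeromorphic. The main obstacle — such as it is — is purely bookkeeping: making sure the hypothesis $(X,\phi),(X',\phi')\in\gm_\Lambda^o$ (same component) is genuinely used, namely it guarantees that the common period is realised inside $\overline\gm_\Lambda^o$ so that the isomorphism $\kp:\overline\gm_\Lambda^o\congpf D$ forces $(X,\phi)$ and $(X',\phi')$ to be inseparable rather than merely having equal periods in different components. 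All the analytic heavy lifting (convergence of graphs, structure of the limit cycle, the K\"ahler cone input) is already encapsulated in Proposition \ref{prop:nonsepbirat} and need not be redone.
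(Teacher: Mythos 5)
Your proposal is correct and follows essentially the same route as the paper: the generic case is settled by Theorem \ref{theo:Verb}, and in the non-generic case one observes that equal periods in the same component force the two points to be inseparable (the paper phrases this as realizing them as limits of sequences with generic, equal periods), whereupon Proposition \ref{prop:nonsepbirat} (i.e.\ Theorem 4.3 of \cite{HuyInv}) yields the bimeromorphic correspondence. Your explicit use of Corollary \ref{cor:periodiso} to derive $(X,\phi)\sim(X',\phi')$ is just a slightly more formal packaging of the same argument.
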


\subsection{Standard Global Torelli}

Ideally of course, one would like to have a result that deduces from the existence of a Hodge isometry
$H^2(X,\ZZ)\cong H^2(X',\ZZ)$ between two compact hyperk\"ahler manifolds $X$ and $X'$ 
information on the relation between the geometry of the two manifolds. Unfortunately,
Theorem \ref{theo:Verb} fails to produce or to predict such a result. 
As discussed in the introduction, the  generic injectivity of the period map on each connected component 
$\gm_\Lambda^o\subset\gm_\Lambda$, as shown by Verbitsky's  Theorem \ref{theo:Verb}, proves `one half' of the
standard Global Torelli theorem. The `other half', the condition  ${\rm O}(H^2(X,\ZZ))/{\rm Mon}(X)=\{\pm1\}$ on the monodromy action on $H^2(X,\ZZ)$,
does not hold in general. Recall that ${\rm Mon}(X)$
is the subgroup of ${\rm O}(H^2(X,\ZZ))$ generated by the image of all monodromy
representations $\pi_1(B,t)\to{\rm O}(H^2(X,\ZZ))$ induced by smooth proper holomorphic families $\kx\to B$ with
$\kx_t=X$. Here, the base $B$ can be arbitrarily singular.

Thus, in full generality Theorem \ref{theo:Verb} only yields the following weak form of the standard
Global Torelli theorem in which the condition on the monodromy action is not always satisfied and in any
case hard to verify.

\begin{coro} Two compact hyperk\"ahler manifolds $X$ and $X'$ are bimeromorphic
if and only if there exists a Hodge isometry $H^2(X,\ZZ)\cong H^2(X',\ZZ)$ that
can be written as a composition of maps induced by isomorphisms and
parallel transport along paths of complex structures. \qqed
\end{coro}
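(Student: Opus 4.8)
The plan is to deduce the final corollary as a packaging of Theorem \ref{theo:Verb} together with the structural results on the moduli space already assembled in the text, specifically Corollary \ref{cor:periodiso} and Proposition \ref{prop:nonsepbirat}. The statement asserts an ``if and only if'': two compact hyperk\"ahler manifolds $X$ and $X'$ are bimeromorphic precisely when there is a Hodge isometry $H^2(X,\ZZ)\cong H^2(X',\ZZ)$ that factors as a composition of maps induced by biholomorphisms and by parallel transport along paths of complex structures.

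For the backward direction, I would argue as follows. Suppose $\psi:H^2(X,\ZZ)\congpf H^2(X',\ZZ)$ is a Hodge isometry that decomposes into a chain of isomorphisms induced by biholomorphic maps and by parallel transport operators $\kx_s\rightsquigarrow\kx_t$ along paths in bases of smooth proper families. Each such elementary piece either is literally realized by a biholomorphism, or connects two fibers of a family, hence two points of a single connected component $\gm_\Lambda^o$ lying over period points related by that parallel transport. Choosing compatible markings along the chain, one sees that $(X,\phi)$ and $(X',\phi')$ can be taken in the same connected component $\gm_\Lambda^o$ with $\kp(X,\phi)=\kp(X',\phi')$; indeed the composite isometry $\psi$ being a Hodge isometry forces the periods to agree after transporting through the chain. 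Now Proposition \ref{prop:nonsepbirat} (together with the description of the fibres of $\gm_\Lambda^o\to\overline\gm_\Lambda^o$ via the relation $\sim$, and Corollary \ref{cor:periodiso} which collapses $\overline\gm_\Lambda^o$ isomorphically onto $D$) shows that two points of $\gm_\Lambda^o$ with equal period are inseparable, hence the underlying manifolds are bimeromorphic. That gives $X$ bimeromorphic to $X'$.

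For the forward direction, suppose $X$ and $X'$ are bimeromorphic. A bimeromorphic map between compact hyperk\"ahler manifolds induces a Hodge isometry $H^2(X,\ZZ)\cong H^2(X',\ZZ)$ (pullback of forms extends across the indeterminacy locus by Hartogs since the exceptional loci have high codimension, and it preserves the Beauville--Bogomolov form because that form is, up to scaling, a root of the top self-intersection, which is a bimeromorphic invariant in this setting). One then needs to realize this particular isometry as a composition of the allowed elementary pieces. The mechanism is exactly the one displayed in the proof of Proposition \ref{prop:nonsepbirat}: bimeromorphic $X$ and $X'$ give inseparable points $x\sim y$ of $\gm_\Lambda$ (after suitable marking), which in turn means there is a sequence $t_i\to x$, $t_i\to y$, i.e.\ the isometry is obtained by parallel transport into the $t_i$ from the $X$--side and back out to the $X'$--side; in the limit, when $Z$ happens to be the graph of an isomorphism, one gets a biholomorphism directly, and the correction cycles $Y_k$ act trivially on $H^2$. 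Assembling these, $\psi$ is written in the required form.

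The main obstacle is the forward direction --- concretely, matching up the \emph{given} Hodge isometry (rather than merely \emph{some} Hodge isometry) with a composition of the allowed operations. A bimeromorphic correspondence a priori yields one distinguished isometry, but to land inside $\gm_\Lambda^o$ one may need to compose with $\pm\id$ and, more seriously, to know that the isometry induced on $H^2$ by the bimeromorphic map coincides with the one coming from the parallel-transport description through the $t_i$; this is where the cycle decomposition $\Gamma=Z+\sum Y_k$ and the triviality of $[Y_k]_*$ on $\Lambda$ from the proof of Proposition \ref{prop:nonsepbirat} are essential. One must also be a little careful that the relation $\sim$ is genuinely symmetric and that passing to a connected component does not lose the correspondence --- but this is covered by the remark following Corollary \ref{cor:Hausms} and by the fact that bimeromorphic manifolds are deformation equivalent, hence parametrized by the same component. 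Once these bookkeeping points are in place, the corollary is a formal consequence of the results already proved.
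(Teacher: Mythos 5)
Your ``if'' direction is sound and is essentially the paper's (implicit) argument: a decomposition of the isometry into isomorphism-induced maps and parallel transports puts $(X,\phi)$ and $(X',\phi')$ in one connected component $\gm_\Lambda^o$ with equal periods, and then Corollary \ref{cor:periodiso} together with Proposition \ref{prop:nonsepbirat} yields bimeromorphy. (Your side worry about matching the \emph{given} isometry is unnecessary: the statement only asks for the existence of \emph{some} Hodge isometry of the prescribed form, so producing $\phi'^{-1}\circ\phi$ for suitable markings is enough.)

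The gap is in the ``only if'' direction. You write that ``bimeromorphic $X$ and $X'$ give inseparable points $x\sim y$ of $\gm_\Lambda$ (after suitable marking)'' and attribute this to ``the mechanism displayed in the proof of Proposition \ref{prop:nonsepbirat}.'' That proposition and its proof go in the opposite direction: they \emph{start} from inseparable points, extract the sequence $t_i$ and the limit cycle $\Gamma=Z+\sum Y_k$, and \emph{conclude} bimeromorphy. Nothing in that argument takes a bimeromorphic map as input and produces inseparable marked points, and the implication is not formal: one must show that a bimeromorphic map between compact hyperk\"ahler manifolds deforms to an actual isomorphism of generic nearby deformations (equivalently, that the two manifolds are deformation equivalent and become non-separated points of $\gm_\Lambda$ for suitable markings). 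This is Theorem 4.6 of \cite{HuyInv}, a nontrivial result which the paper uses but neither states nor reproves; without invoking it (or an equivalent statement), your forward direction does not close. Once that input is granted, the rest of your sketch is fine: inseparability places the two marked manifolds in one connected component with equal periods, and $\phi'^{-1}\circ\phi$ then decomposes into parallel transports along a path in $\gm_\Lambda^o$ together with isomorphism-induced maps at chart overlaps, which is exactly the form required.
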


\begin{coro}\label{cor:MontrueGT}
In particular, if ${\rm O}(H^2(X,\ZZ))/{\rm Mon}(X)=\{\pm1\}$, then 
the bimeromorphic type of $X$ (and for generic $X$ even the isomorphism type)
is determined by its period among compact hyperk\"ahler manifolds that are deformation equivalent to $X$.\qqed
\end{coro}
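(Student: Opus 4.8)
The plan is to read the statement off from Verbitsky's Theorem~\ref{theo:Verb} together with Corollary~\ref{cor:periodiso} and Proposition~\ref{prop:nonsepbirat}; the only genuine work is a bookkeeping with markings, and this is precisely where the monodromy hypothesis enters, its sole function being to keep the two manifolds inside one connected component of $\gm_\Lambda$.

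First I would fix the data. Let $X'$ be a compact hyperkähler manifold deformation equivalent to $X$ and admitting a Hodge isometry $\psi\colon H^2(X,\ZZ)\congpf H^2(X',\ZZ)$ --- this is what ``having the same period as $X$'' means. Put $\Lambda:=H^2(X,\ZZ)$ with the Beauville--Bogomolov form, fix a marking $\phi$ of $X$, choose a smooth proper family $\kx\to B$ over a connected base with $\kx_t\cong X$ and $\kx_{t'}\cong X'$, and let $\tau\colon H^2(X,\ZZ)\congpf H^2(X',\ZZ)$ denote parallel transport along a path from $t$ to $t'$ in $B$ (an isometry, by the deformation invariance of the Beauville--Bogomolov form). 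Then $\phi_1:=\phi\circ\tau^{-1}$ is a marking of $X'$, and since the family $\kx\to B$ with all fibres marked by parallel transport of $\phi$ determines a continuous map from the connected space $B$ to $\gm_\Lambda$, the points $(X,\phi)$ and $(X',\phi_1)$ lie in one and the same connected component $\gm_\Lambda^o$.

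Second I would invoke the hypothesis. The composite $\tau^{-1}\circ\psi$ is an isometry of $H^2(X,\ZZ)$, hence by assumption equals $\pm m$ for some $m\in{\rm Mon}(X)$; replacing $\psi$ by $-\psi$ if needed --- still a Hodge isometry, and one inducing the same period since $-{\rm id}$ acts trivially on $\PP(\Lambda\otimes\CC)$ --- we may assume $\psi=\tau\circ m$. Then $\phi':=\phi\circ\psi^{-1}$ is a marking of $X'$ with $\kp(X',\phi')=[\phi'(H^{2,0}(X'))]=[\phi(H^{2,0}(X))]=\kp(X,\phi)$ (because $\psi$ is a Hodge isometry), and $\phi'\circ\phi_1^{-1}=\phi\circ(\psi^{-1}\circ\tau)\circ\phi^{-1}=\phi\circ m^{-1}\circ\phi^{-1}$ lies in $\phi\circ{\rm Mon}(X)\circ\phi^{-1}$. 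Since this last group is the stabilizer of $\gm_\Lambda^o$ in ${\rm O}(\Lambda)$ (cf.\ Section~\ref{sect:GTmoduli}), the point $(X',\phi')=(\phi'\circ\phi_1^{-1})\cdot(X',\phi_1)$ again lies in $\gm_\Lambda^o$. (Alternatively, the identity $\psi=\tau\circ m$ already exhibits a Hodge isometry written as a composition of parallel transports, so the preceding corollary gives at once that $X$ and $X'$ are bimeromorphic.)

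Finally I would conclude. If $\kp(X,\phi)$ lies outside $D\cap\bigcup_{0\ne\alpha\in\Lambda}\alpha^\perp$, then Theorem~\ref{theo:Verb} forces $(X,\phi)=(X',\phi')$ inside $\gm_\Lambda^o$, hence $X\cong X'$. For an arbitrary period, $(X,\phi)$ and $(X',\phi')$ have the same image in $\overline\gm_\Lambda^o$ by Corollary~\ref{cor:periodiso}, i.e.\ they are inseparable points of $\gm_\Lambda$, and Proposition~\ref{prop:nonsepbirat} then shows $X$ and $X'$ are bimeromorphic. The step I expect to be the main obstacle is the second paragraph: one must be sure that, after the possible sign change and after absorbing the monodromy operator $m$, the marking $\phi'$ produced for $X'$ really sits over the component $\gm_\Lambda^o$ of $(X,\phi)$ --- without this, Theorem~\ref{theo:Verb} gives no comparison between $(X,\phi)$ and $(X',\phi')$, and indeed if one drops the hypothesis ${\rm O}(H^2(X,\ZZ))/{\rm Mon}(X)=\{\pm1\}$ the element $\tau^{-1}\circ\psi$ need no longer be $\pm$ a monodromy transformation and the two marked manifolds may land in genuinely different components carrying the same period.
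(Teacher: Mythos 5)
Your argument is correct and is essentially the unpacking the paper intends: the paper derives Corollary~\ref{cor:MontrueGT} from the preceding corollary by exactly your observation that the monodromy hypothesis lets one write any Hodge isometry, up to sign, as $\tau\circ m$ with $\tau$ a parallel transport and $m\in{\rm Mon}(X)$, which places $(X,\phi)$ and $(X',\phi')$ in the same component so that Theorem~\ref{theo:Verb} (generic case) and Proposition~\ref{prop:nonsepbirat} via Corollary~\ref{cor:periodiso} (general case) apply. Your bookkeeping with the markings $\phi_1=\phi\circ\tau^{-1}$ and $\phi'=\phi\circ\psi^{-1}$ is accurate, noting only that you need just the inclusion $\phi\circ{\rm Mon}(X)\circ\phi^{-1}\subset{\rm Stab}(\gm_\Lambda^o)$, which holds by the definition of the monodromy group.
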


The monodromy group ${\rm Mon}(X)$ has been computed  by Markman
for $X={\rm Hilb}^n(S)$ and arbitrary $n$ (see \cite{Mark,Mark3}). In particular, his results tell us exactly
when the mono\-dromy condition, and thus the standard Global Torelli theorem for deformations of ${\rm Hilb}^n(S)$,
 do hold. 

\begin{theo}[Markman]
Let $X$ be deformation equivalent to the Hilbert scheme ${\rm Hilb}^n(S)$ of a K3 surface $S$.
Then ${\rm O}(H^2(X,\ZZ))/{\rm Mon}(X)=\{\pm 1\}$ if and only if \mbox{$n=p^k+1$} for some prime number $p$
or $n=1$.\qqed 
\end{theo}

\begin{coro}\label{cor:CorHilb}
Suppose $X$ and $X'$ are deformation equivalent to the Hilbert scheme
${\rm Hilb}^n(S)$ of a K3 surface $S$ such that $n=p^k+1$ for some  prime number $p$.
Then there exists a Hodge isometry $H^2(X,\ZZ)\cong H^2(X',\ZZ)$ if and only if
$X$ and $X'$ are bimeromorphic.\qqed
\end{coro}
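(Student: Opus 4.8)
The plan is to read off this corollary from Verbitsky's Theorem \ref{theo:Verb}, in the guise of the bimeromorphic Global Torelli statement proved above, combined with Markman's computation of the monodromy group. The bridge between the two is the elementary remark that $X$ and $X'$, being both deformation equivalent to ${\rm Hilb}^n(S)$, are deformation equivalent to each other.

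First I would dispose of the easy implication. If $X$ and $X'$ are bimeromorphic, then, since compact hyperk\"ahler manifolds have trivial canonical bundle, any bimeromorphic map between them is an isomorphism away from closed analytic subsets of codimension $\geq 2$ on either side; restricting cohomology classes across this isomorphism yields an isomorphism of weight-two Hodge structures $H^2(X,\ZZ)\congpf H^2(X',\ZZ)$ compatible with the Beauville--Bogomolov forms. This is classical (compare \cite{HuyInv}) and uses neither Verbitsky's nor Markman's theorem.

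For the converse, let $\psi\colon H^2(X,\ZZ)\congpf H^2(X',\ZZ)$ be a Hodge isometry. Fix a marking $\phi$ of $X$ and let $\gm_\Lambda^o\subset\gm_\Lambda$ be the connected component containing $(X,\phi)$. Because $X$ and $X'$ are deformation equivalent, transporting $\phi$ along a path in the base of a connecting family produces a marking $\phi'$ of $X'$ with $(X',\phi')\in\gm_\Lambda^o$. Now set $\phi'':=\phi\circ\psi^{-1}$, a marking of $X'$, and $g:=(\phi')^{-1}\circ\phi''\in{\rm O}(H^2(X',\ZZ))$, so that $\phi''=\phi'\circ g$. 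Since a Hodge isometry carries $H^{2,0}(X)$ to $H^{2,0}(X')$, one has $\kp(X',\phi'')=[\phi(H^{2,0}(X))]=\kp(X,\phi)$. Here I invoke Markman's theorem: the hypothesis $n=p^k+1$ gives ${\rm O}(H^2(X',\ZZ))/{\rm Mon}(X')=\{\pm1\}$, so $g$ or $-g$ lies in ${\rm Mon}(X')$; replacing $\psi$ by $-\psi$ if necessary, which replaces $g$ by $-g$ and leaves the period $\kp(X',\phi'')$ unchanged, we may assume $g\in{\rm Mon}(X')$. As monodromy operators are realised by parallel transport along loops, the points $(X',\phi'')=(X',\phi'\circ g)$ and $(X',\phi')$ lie in the same connected component, hence $(X',\phi'')\in\gm_\Lambda^o$. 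Thus $(X,\phi)$ and $(X',\phi'')$ are two marked manifolds in one connected component of $\gm_\Lambda$ with the same period, and the bimeromorphic Global Torelli theorem proved above, via Corollary \ref{cor:periodiso} and Proposition \ref{prop:nonsepbirat} (as summarised in Corollary \ref{cor:MontrueGT}), shows that $X$ and $X'$ are bimeromorphic.

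The one step that genuinely carries the argument is the appeal to Markman's theorem. An abstract Hodge isometry $\psi$ is a priori unrelated to the monodromy action, and without control of ${\rm Mon}(X')$ there is no reason for $(X',\phi\circ\psi^{-1})$ to land in the same connected component of $\gm_\Lambda$ as $(X,\phi)$ — this is exactly the mechanism by which the standard form of the Global Torelli theorem fails in general. It is precisely the numerical hypothesis $n=p^k+1$, through the identity ${\rm O}(H^2(X,\ZZ))/{\rm Mon}(X)=\{\pm1\}$, that collapses this ambiguity to a harmless sign and allows one to normalise $\psi$ into a monodromy operator. Everything else — the codimension statement in the easy direction, the invariance of the period under $\psi$, the fact that monodromy operators preserve connected components — is routine.
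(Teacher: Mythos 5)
Your argument is correct and is exactly the intended one: the paper leaves this corollary as an immediate consequence of Markman's theorem together with Corollary \ref{cor:MontrueGT} (i.e.\ Theorem \ref{theo:Verb} plus the monodromy condition), and your write-up simply fills in the routine details — normalising the Hodge isometry by a sign so that the induced lattice automorphism lies in ${\rm Mon}(X')$, placing both marked manifolds in one connected component, and applying the bimeromorphic Global Torelli statement, with the converse direction being the classical fact that a bimeromorphic map of compact hyperk\"ahler manifolds is an isomorphism in codimension one.
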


\begin{rema} Note that for all other values of $n$ the standard Global Torelli theorem fails, i.e.\ there exist
Hodge isometric deformations of ${\rm Hilb}^n(S)$ that are not bimeromorphic.
  A conjectural explicit description for the monodromy group of the generalized Kummer varieties $K^n(S)$ can be found in
\cite{Mark2}.
\end{rema}

\begin{rema}
Clearly, ${\rm Mon}(X)$ is contained in the image of $\Diff(X)\to{\rm O}(H^2(X,\ZZ))$. However, it
is not known whether the two groups always coincide. For a K3 surface $S$ the computation of
the monodromy group ${\rm Mon}(S)$ is not too difficult. It coincides with the index two subgroup
${\rm O}_+(H^2(S,\ZZ))\subset {\rm O}(H^2(S,\ZZ))$ of all orthogonal transformations preserving
the orientation of a positive three-space. That in this case ${\rm Mon}(S)$  indeed coincides with the action
of the full diffeomorphism group $\Diff(S)$, which is equivalent to the assertion that $-{\rm id}$  is not induced by any
diffeomorphism, is a theorem of Donaldson.
\end{rema}

\subsection{Global Torelli theorem for K3 surface revisited}

As it turns out, Verbitsky's result provides a new approach towards the Global Torelli theorem for K3 surfaces.
Apparently, the potential usefulness of twistor spaces not only for the surjectivity of the period map but also
for its injectivity was discussed among specialists thirty years ago but details have never been worked out. 

We shall briefly explain the situation of K3 surfaces and what precisely is used to prove Theorem
\ref{theo:Verb}.

\smallskip

{\it i) The existence of hyperk\"ahler metrics in each K\"ahler class.} This is a highly non-trivial statement
and uses Yau's solution of the Calabi conjecture. The existence is crucial for Verbitsky's approach as it
ensures the existence of twistor spaces upon which everything else hinges. The theory as represented
in \cite{BeauvAst}, which in turn relies on work of Looijenga and Peters and many others, also uses Yau's result,
but the original proof for algebraic or K\"ahler K3 surfaces due to Pjatecki{\u\i}-{\v{S}}apiro, {\v{S}}afarevi{\v{c}}, 
resp.\  Burns, Rapoport of course did not.

\smallskip

{\it ii) The description of the K\"ahler cone.} More precisely, the proof uses the
fact that a K3 surface $S$ with trivial Picard group has maximal K\"ahler cone, i.e.\
$\kk_S=\kc_S$ (cf.\ Theorem \ref{thm:KK}).  The description of $\kk_S$ for an arbitrary K3 surface $S$ is
much more precise as the statement in higher dimensions (see Remark \ref{rem:KKarb}): $\kk_S$
 is cut out of $\kc_S$ by hyperplanes orthogonal to smooth (!) \!rational curves. (Note that in \cite{BeauvAst} one first proves the surjectivity of the period
map  which is then used to prove this more precise version.)

\smallskip

So far,  the general line of arguments were simply applied to the two-dimensional case.  It would be interesting to see whether
the proofs of {\it i)} and {\it ii)} can be simplified  for K3 surfaces in an essential way. In any case, the arguments to
prove Theorem \ref{theo:Verb} (in arbitrary dimensions) yield that for any connected component $\gm^o$ of the moduli
space of marked K3 surfaces $\gm$ the period map $\kp:\gm^o\to D\subset \PP(\Lambda\otimes\CC)$ is surjective and
generically injective. Moreover, if $(S,\phi),(S',\phi')\in\gm^o$ are contained in the same fibre of $\kp$, then
$S$ and $S'$ are isomorphic.

\smallskip

In order to prove the Global Torelli theorem for K3 surfaces in its original form, one last step is needed (see Corollary \ref{cor:MontrueGT} and page \pageref{pref:GTK3}). (One also needs Kodaira's result that any two K3 surfaces are deformation equivalent. For a rather easy proof see e.g.\ \cite[Exp.\ VI]{BeauvAst}.)

\smallskip
 
{\it iii) For a K3 surface $S$ one has ${\rm O}(H^2(S,\ZZ))/{\rm Mon}(S)=\{\pm1\}$}. Of course, this can be deduced a posteriori
from the Global Torelli theorem for K3 surfaces. But in fact much easier, more direct
arguments exist using classical results on the orthogonal group of unimodular lattices like $2(-E_8)\oplus 2U$
due to Wall, Ebeling, and Kneser.

\smallskip

To conclude, the Global Torelli theorem for K3 surfaces could have been proved along the lines
presented here some thirty years ago. The key step, the properness of the period map $\gm_\Lambda\to D$,
relies on techniques that are very similar to those used for the surjectivity of the period map by Todorov, Looijenga, and Siu.

The main difference of this approach towards the Global Torelli theorem compared to the classical one is that
one does not need to first prove the result for a distinguished class of K3 surfaces, like Kummer surfaces,  and
then use the density of those to extend it  to arbitrary K3 surfaces. Since in higher dimensions no dense distinguished class of hyperk\"ahler manifolds that could play the role of Kummer surfaces is known, this new approach seems the only feasible one.


\end{document}